\newcommand{\Spvek}[2][r]{%
  \gdef\@VORNE{1}
  \left(\hskip-\arraycolsep%
    \begin{array}{#1}\vekSp@lten{#2}\end{array}%
  \hskip-\arraycolsep\right)}
\def\vekSp@lten#1{\xvekSp@lten#1;vekL@stLine;}
\def\vekL@stLine{vekL@stLine}
\def\xvekSp@lten#1;{\def\temp{#1}%
  \ifx\temp\vekL@stLine
  \else
    \ifnum\@VORNE=1\gdef\@VORNE{0}
    \else\@arraycr\fi%
    #1%
    \expandafter\xvekSp@lten
  \fi}
\newtheorem{thm}{Theorem}[section]
\newtheorem{lem}[thm]{Lemma}
\newtheorem{prp}[thm]{Proposition}
\newtheorem{rem}[thm]{Remark}
\newtheorem{defn}[thm]{Definition}
\newcommand{\scr}[1]{\mathscr #1}
\definecolor{wco}{rgb}{0.5,0.2,0.3}
\numberwithin{equation}{section} \theoremstyle{remark}
\newcommand{\ua}{\uparrow}
\title{{\bf    Harnack Inequalities for McKean-Vlasov
SDEs Driven by Subordinate Brownian Motions}\footnote{Supported in
 part by  NNSFC (11801406, 11831015).}
}
\author{
{\bf     Chang-Song Deng $^{a)}$, Xing Huang $^{b)}$,    }\\
\footnotesize{  a)School of Mathematics and Statistics, Wuhan University, Wuhan 430072, China}\\
\footnotesize{ dengcs@whu.edu.cn }\\
\footnotesize{  b)Center for Applied Mathematics, Tianjin University, Tianjin 300072, China}\\
\footnotesize{  xinghuang@tju.edu.cn}}
\begin{document}
\allowdisplaybreaks
\def\R{\mathbb R}  \def\ff{\frac} \def\ss{\sqrt} \def\B{\mathbf
B} \def\W{\mathbb W}
\def\N{\mathbb N} \def\kk{\kappa} \def\m{{\bf m}}
\def\ee{\varepsilon}\def\ddd{D^*}
\def\dd{\delta} \def\DD{\Delta} \def\vv{\varepsilon} \def\rr{\rho}
\def\<{\langle} \def\>{\rangle} \def\GG{\Gamma} \def\gg{\gamma}
  \def\nn{\nabla} \def\pp{\partial} \def\E{\mathbb E}
\def\d{\text{\rm{d}}} \def\bb{\beta} \def\aa{\alpha} \def\D{\scr D}
  \def\si{\sigma} \def\ess{\text{\rm{ess}}}
\def\beg{\begin} \def\beq{\begin{equation}}  \def\F{\scr F}
\def\Ric{\text{\rm{Ric}}} \def\Hess{\text{\rm{Hess}}}
\def\e{\text{\rm{e}}} \def\ua{\underline a} \def\OO{\Omega}  \def\oo{\omega}
 \def\tt{\tilde} \def\Ric{\text{\rm{Ric}}}
\def\cut{\text{\rm{cut}}} \def\P{\mathbb P} \def\ifn{I_n(f^{\bigotimes n})}
\def\C{\scr C}      \def\aaa{\mathbf{r}}     \def\r{r}
\def\gap{\text{\rm{gap}}} \def\prr{\pi_{{\bf m},\varrho}}  \def\r{\mathbf r}
\def\Z{\mathbb Z} \def\vrr{\varrho}
\def\L{\scr L}\def\Tt{\tt} \def\TT{\tt}\def\II{\mathbb I}
\def\i{{\rm in}}\def\Sect{{\rm Sect}}  \def\H{\mathbb H}
\def\M{\scr M}\def\Q{\mathbb Q} \def\texto{\text{o}}
\def\Rank{{\rm Rank}} \def\B{\scr B} \def\i{{\rm i}} \def\HR{\hat{\R}^d}
\def\to{\rightarrow}\def\l{\ell}\def\iint{\int}
\def\EE{\scr E}\def\Cut{{\rm Cut}}
\def\A{\scr A} \def\Lip{{\rm Lip}}
\def\BB{\scr B}\def\Ent{{\rm Ent}}\def\L{\scr L}
\def\R{\mathbb R}  \def\ff{\frac} \def\ss{\sqrt} \def\B{\mathbf
B}
\def\N{\mathbb N}
\def\CC{\mathbb C}
\def\kk{\kappa} \def\m{{\bf m}}
\def\dd{\delta} \def\DD{\Delta} \def\vv{\varepsilon} \def\rr{\rho}
\def\<{\langle} \def\>{\rangle} \def\GG{\Gamma} \def\gg{\gamma}
  \def\nn{\nabla} \def\pp{\partial} \def\E{\mathbb E}
\def\d{\text{\rm{d}}} \def\bb{\beta} \def\aa{\alpha} \def\D{\scr D}
  \def\si{\sigma} \def\ess{\text{\rm{ess}}}
\def\beg{\begin} \def\beq{\begin{equation}}  \def\F{\scr F}
\def\Ric{\text{\rm{Ric}}} \def\Hess{\text{\rm{Hess}}}
\def\e{\text{\rm{e}}} \def\ua{\underline a} \def\OO{\Omega}  \def\oo{\omega}
 \def\tt{\tilde} \def\Ric{\text{\rm{Ric}}}
\def\cut{\text{\rm{cut}}} \def\P{\mathbb P} \def\ifn{I_n(f^{\bigotimes n})}
\def\C{\scr C}      \def\aaa{\mathbf{r}}     \def\r{r}
\def\gap{\text{\rm{gap}}} \def\prr{\pi_{{\bf m},\varrho}}  \def\r{\mathbf r}
\def\Z{\mathbb Z} \def\vrr{\varrho}
\def\L{\scr L}\def\Tt{\tt} \def\TT{\tt}\def\II{\mathbb I}
\def\i{{\rm in}}\def\Sect{{\rm Sect}}  \def\H{\mathbb H}
\def\M{\scr M}\def\Q{\mathbb Q} \def\texto{\text{o}} \def\LL{\Lambda}
\def\Rank{{\rm Rank}} \def\B{\scr B} \def\i{{\rm i}} \def\HR{\hat{\R}^d}
\def\to{\rightarrow}\def\l{\ell}
\def\8{\infty}\newcommand\I{\mathds 1}\def\U{\scr U}
\maketitle

\begin{abstract} The existence and uniqueness are established for McKean-Vlasov SDEs driven by L\'{e}vy
processes. By using an approximation
technique and coupling by change of measures, Harnack
inequalities are investigated for McKean-Vlasov SDEs
driven by subordinate Brownian motions.
\end{abstract} \noindent
 AMS subject Classification:\  60G51, 60H30.   \\
\noindent
 Keywords:   McKean-Vlasov SDE, L\'{e}vy
process, subordinator, Harnack inequality.

 \vskip 2cm

\section{Introduction}

It is well known that solution to the
linear Fokker-Planck-Kolmogorov equation (FPKE)
(cf. \cite{BKRS}) can be constructed by the time marginal
distributions of solution to It\^{o} (distribution
independent) stochastic differential
equation (SDE), see e.g. \cite{IW}.
This means that we can describe FPKEs
by using a probabilistic approach (\cite{BR1, BR2, RXZ20}). However, many important partial differential equations (PDEs) for probability measures are nonlinear, see, for instance,
\cite{BKRS,CA, DV1, DV2, FG, Gu, V2} and
references therein. Such PDEs are also of Fokker--Planck type. Fortunately, nonlinear FPKEs are also closely connected
to the so-called
distribution dependent SDEs, also named
McKean-Vlasov SDEs in the literature, in which the coefficients
depend on the distribution of the solution.
Barbu and R\"{o}ckner \cite{BR1,BR2} investigated
one-to-one correspondence between nonlinear FPKEs with second-order differential operator and
McKean-Vlasov SDEs driven by Brownian motion,
see also \cite{HRW} for closely related results
on path dependent nonlinear FPKEs
and path-distribution dependent SDEs
with Brownian noise.

Recently, Jourdain, M\'{e}l\'{e}ard
and Woyczynski
in \cite{JMW} investigated McKean-Vlasov model
with multiplicative L\'{e}vy noises. For McKean-Vlasov
SDEs driven by additive L\'{e}vy processes,
Y. Song \cite{Song} applied Malliavin calculus
to get  exponential
ergodicity  in the total variance distance, while
Liang, Majka and Wang used a different
approach in \cite{LMW} to derive
exponential ergodicity in
the $L^1$-Wasserstein distance.

Let $\scr P$ be the family of all probability
measures on $\R^d$ equipped with the weak topology, and $\L_\zeta$ denote
the distribution of a random
variable $\zeta$. When a different probability
measure $\tilde{\P}$ is concerned, we use
$\L_\zeta|_{\tilde{\P}}$ to denote
the law of $\zeta$ under $\tilde{\P}$.
In this paper, we
consider the following McKean-Vlasov
SDEs driven by L\'evy processes:
\beq\label{E1}
\d X_t=b(t,X_t,\L_{X_t})\,\d t+\sigma(t)\,\d Z_t,
\end{equation}
where $b: [0,\infty)\times \R^d\times\scr P\to \R^d$ and
$\si:[0,\infty)\to \R^d\otimes\R^d$
are measurable and locally bounded, and $Z=\{Z_t\}_{t\geq 0}$
is a $d$-dimensional L\'{e}vy process with $Z_0=0$.

Note that $Z$ has stationary and independent
increments and almost surely c\`adl\`ag (right-continuous with finite left limits) paths $t\mapsto Z_t$. Since $Z$ is a (strong) Markov process, it is completely characterized
by the law of $Z_t$, hence by the characteristic function of $Z_t$. It is well known that
$$
    \E\e^{\i\<\xi,Z_t\>}=\e^{-t\psi(\xi)},
    \quad t>0,\;\xi\in\R^d,
$$
where the symbol (characteristic exponent)
$\psi:\R^d\to\CC$ is given by
the L\'evy--Khintchine formula
$$
    \psi(\xi)
    =-\i\<l,\xi\> + \frac12\<\xi, Q\xi\> +\int_{\R^d\setminus\{0\}} \left(1-\e^{\i\<\xi, x\>} + \i\<\xi, x\> \I_{(0,1)}(|x|)\right)\,\nu_Z(\d x),
$$
where $l\in\R^d$ is the drift coefficient, $Q$ is a nonnegative semidefinite $d\times d$ matrix, and $\nu_Z$ is
the L\'evy measure
on $\R^d\setminus\{0\}$ satisfying $\int_{\R^d\setminus\{0\}}(1\wedge|x|^2)\,\nu_Z(\d x)<\infty$. The L\'evy triplet $(l,Q,\nu_Z)$ uniquely determines $\psi$, hence $Z$ and the infinitesimal generator
of $Z$ is of the form
\begin{equation}\label{levy}
    \mathscr{A}f
    = \<l,\nabla f\> +\frac12 \<\nabla, Q\nabla\> f
    +\int_{\R^d\setminus\{0\}} \left(f(x+\cdot)-f- \<x,\nabla f\> \I_{(0,1)}(|x|)\right)\,\nu_Z(\d x)
\end{equation}
for $f\in C_b^2(\R^d)$.

The first contribution of the present
paper is the existence
and uniqueness of the solution
to \eqref{E1}, see Theorem \ref{TEU} below.
To this end, we shall follow the iteration
argument used in \cite{ W16};
moreover, we need to bound the moment for solutions
to L\'{e}vy-driven (distribution
independent) SDEs with one-sided Lipschitz
continuous drift.

The dimension-free Harnack inequality, initialized
in \cite{Wan97}, has become an efficient tool in stochastic analysis, and it can be used to study
the strong Feller property, heat
kernel estimates, transportation-cost
inequalities, hyperboundedness, and many more; we refer to the
monograph by F.-Y. Wang
\cite[Subsection 1.4.1]{Wbook} for an
in-depth explanation of its applications.

To establish Harnack inequality
for McKean-Vlasov SDEs with jumps, we will restrict ourselves to
the special case $Z_t=W_{S_t}$, where $W=\{W_t\}_{t\geq 0}$
is a standard Brownian motion on $\R^d$,
and $S=\{S_t\}_{t\geq 0}$ is a subordinator
independent of $W$. Then the equation \eqref{E1} reduces to
\beq\label{sbb}
\d X_t=b(t,X_t,\L_{X_t})\,\d t+\sigma(t)\,\d W_{S_t}.
\end{equation}
We will adopt absolutely continuous path to approximate the path of $S$ as
in \cite{WW,ZXC,DH}, and, as it turns out, this will
be crucial for our study. As before
(see e.g. \cite{WW, DH}), a coupling argument
and the Girsanov theorem will also be used.

Recall that a subordinator $S=\{S_t\}_{t\geq 0}$ is a nondecreasing
L\'{e}vy process on $[0,\infty)$, and it is uniquely
determined by its Laplace transform which is of the form
$$
    \E \,\e^{-r S_t}=\e^{-t\phi(r)},\quad r>0,t\geq0.
$$
The characteristic (Laplace)
exponent $\phi:(0,\infty)\rightarrow(0,\infty)$ is
a Bernstein function, i.e. a $C^\infty$-function such that $\phi\geq 0$ and with alternating derivatives $(-1)^{n+1} \phi^{(n)}\geq0$, $n\in\N$. Every such $\phi$ has a unique L\'{e}vy--Khintchine representation
\begin{equation}\label{bern}
    \phi(r)
    = \varrho\, r+\int_{(0,\infty)}\left(1-\e^{-rx}\right)\,
    \nu_S(\d x), \quad r>0,
\end{equation}
where $\varrho\geq0$ is the drift parameter and $\nu_S$ is a L\'{e}vy measure,
that is, a Radon measure on $(0,\infty)$ satisfying
$\int_{(0,\infty)}
    \left(1\wedge x\right)\,\nu_S(\d x)<\infty$.
We use \cite{SSV12} as our standard reference for Bernstein functions and subordinators.

The (random) time-changed process $(W_{S_t})_{t\geq0}$ is a rotationally invariant L\'{e}vy process with symbol $\phi(|\cdot|^2/2)$
and is called a subordinate Brownian motion.
If $S$ is
an $\alpha$-stable subordinator with Bernstein function
$\phi(r)=r^\alpha$ ($0<\alpha<1$),
then $(W_{S_t})_{t\geq0}$ is the well-known
$2\alpha$-stable L\'{e}vy process with
discontinuous sample paths and its generator
is given by the fractional
Laplacian operator $-\frac12\,(-\Delta)^\alpha$.
By choosing different Bernstein
functions, we can construct many other time-changed
Brownian motions. Thus, subordinate Brownian motions
form a very large class of L\'{e}vy processes. Nonetheless, compared with general L\'{e}vy processes, subordinate Brownian motions are much more tractable.

The remaining part of the paper is organized as follows.
In Section 2, we investigate the strong/weak existence
and uniqueness of solutions to McKean-Vlasov SDEs
driven by L\'{e}vy processes. By using an approximation
technique and coupling by change of measures, the dimension-free Harnack inequalities are established in Section 5.
Finally, the appendix contains a result concerning
moments for L\'{e}vy-driven (distribution
independent) SDEs, which has been used
in Section 2.

\section{Existence and uniqueness for McKean-Vlasov
SDEs with L\'{e}vy noises}

For $p\in [1,\infty)$, let
$$\scr P_p := \left\{\mu\in \scr P\,: \,\, \mu(|\cdot|^p):=\int_{\R^d}|x|^p\,\mu(\d x)<\infty\right\}.$$
It is well known that
$\scr P_p$ is a Polish space under the Wasserstein distance
$$\W_p(\mu_1,\mu_2):= \inf_{\pi\in \scr C(\mu_1,\mu_2)} \bigg(\int_{\mathbb{R}^d\times\mathbb{R}^d} |x-y|^p \,\pi(\d x,\d y)\bigg)^{1/p},\quad \mu_1,\mu_2\in \scr P_{p},$$ where $\scr C(\mu_1,\mu_2)$ is the set of all couplings
for $\mu_1$ and $\mu_2$.  Moreover,   the topology induced by $\W_p$ on $\scr P_p$ coincides with the weak topology.

We make the following
assumptions on the L\'{e}vy
measure $\nu_Z$ of $Z$ and the coefficient $b$:
There exists some $\theta\geq 1$ such that
\beg{enumerate}
\item[\textbf{(H1)}] $\int_{|x|\geq1}
|x|^{\theta}\,\nu_Z(\d x)<\infty$;
\item[\textbf{(H2)}] (Continuity)  For every $t\ge 0$, $b(t,\cdot,\cdot)$ is continuous on $\mathbb{R}^d \times\scr P_\theta$;
 \item[\textbf{(H3)}] (Monotonicity) There exist locally bounded functions  $\kappa_1: [0,\infty)\to \R$ and $\kappa_2: [0,\infty)\to [0,\infty)$ such that
 \beg{align*} &2\langle  b(t,x,\mu)- b(t,y,\nu), x-y\rangle\\
 &\qquad\le \kappa_1(t) |x-y|^2+ \kappa_2(t)\W_\theta(\mu,\nu)|x-y|,\quad t\ge 0,\,x,y\in \mathbb{R}^d,\,\mu,\nu\in
\scr P_\theta;\end{align*}
\item[\textbf{(H4)}] (Growth)  
    There exists a locally bounded function  $\Theta: [0,\infty)\to[0,\infty)$ such that
$$|b(t,0,\mu)|\le \Theta (t) \big\{1+(\mu(|\cdot|^\theta))^{1/\theta}\big\},\quad t\ge 0,\, \mu\in \scr P_\theta.$$
\end{enumerate}

\begin{rem}
It is well known that \textbf{\upshape(H1)} is equivalent
    to $\E |Z_t|^{\theta}<\infty$ for
    some \textup{(}or, equivalently, all\textup{)}
    $t>0$, cf. \cite[Theorem 25.3]{Sato}.
\end{rem}

\begin{defn}\label{def1} A c\`{a}dl\`{a}g adapted process $(X_t)_{t\geq
0}$ on $\mathbb{R}^d$ is called
a \textup{(}strong\textup{)} solution of \eqref{E1}, if
$$
\E\int_0^t|b(s,X_s,\L_{X_s})|\,\d s<\infty, \ \ t\geq 0.
$$
and $\P$-a.s.
$$
X_t=X_0+\int_0^tb(s,X_s,\L_{X_s})\,\d s+\int_0^t\si(s)
\,\d Z_s, \quad t\ge0.
$$

\beg{enumerate}
\item[\textup{(1)}] We say that \eqref{E1}
has strong \textup{(}or pathwise\textup{)} existence
and uniqueness in $\scr P_{\theta}$,
if for any $\F_0$-measurable random variable $X_0$ with $\L_{X_0}\in\scr P_{\theta}$, the equation has a unique solution $(X_t)_{t\geq
0}$ satisfying $\E|X_t|^\theta<\infty$ for all $t>0$.

\item[\textup{(2)}] A couple $(\tilde{X}_t
, \tilde{Z}_t)_{t\geq 0}$ is called a weak solution to \eqref{E1}, if $\tilde{Z}=(\tilde{Z})_{t\geq0}$ is a
L\'{e}vy process having the same symbol as $Z$
with respect to a
complete filtered probability space
$(\tilde{\Omega}, \{\tilde{\F}_t\}_{t\geq 0}, \tilde{\P})$,
and $(\tilde{X}_t)_{t\geq 0}$ satisfies
$$
    \d \tilde{X}_t=b(t,\tilde{X}_t,
    \L_{\tilde{X}_t}|_{\tilde{\P}})\,\d t
    +\sigma(t)\,\d \tilde{Z}_t.
$$

\item[\textup{(3)}] \eqref{E1} is said to have
weak uniqueness in $\scr P_\theta$, if any two weak solutions of
the equation with common initial distribution in $\scr P_\theta$ are equal in law. Precisely, if $(\tilde{X}_t
, \tilde{Z}_t)_{t\geq 0}$ with respect to
$(\tilde{\Omega}, \{\tilde{\F}_t\}_{t\geq 0}, \tilde{\P})$
and $(\bar{X}_t
, \bar{Z}_t)_{t\geq 0}$ with respect to
$(\bar{\Omega}, \{\bar{\F}_t\}_{t\geq 0}, \bar{\P})$
are weak solutions of \eqref{E1}, then
$\L_{\tilde{X}_0}|_{\tilde{\P}}
=\L_{\bar{X}_0}|_{\bar{\P}}\in\scr P_\theta$ implies
$\L_{\tilde{X}_t}|_{\tilde{\P}}
=\L_{\bar{X}_t}|_{\bar{\P}}$ for all $t>0$.
\eqref{E1} is said to have strong/weak well-posedness in $\scr P_{\theta}$ if it has strong/weak existence and uniqueness in $\scr P_\theta$.
\end{enumerate}
\end{defn}
\begin{thm}\label{TEU} 
Assume \textbf{\upshape(H1)}-\textbf{\upshape(H4)}. Then the following assertions hold.
\begin{enumerate}
\item[\textup{(1)}] The equation \eqref{E1} has strong well-posedness
in $\scr P_{\theta}$.
\item[\textup{(2)}] The equation \eqref{E1} has weak well-posedness
in $\scr P_{\theta}$.
\end{enumerate}
\end{thm}

We will prove Theorem \ref{TEU} by the argument
used in \cite{W16}. For fixed  $s\ge 0$ and $\F_s$-measurable  $\mathbb{R}^d$-valued random variable $X_{s,s}$ with $\E|X_{s,s}|^\theta<\infty$, set
$$X^{(0)}_{s,t}=X_{s,s}, \ \ \mu_{s,t}^{(0)}=\L_{X^{(0)}_{s,t}},\quad t\ge s.$$
For $n\in\N$, let $(X_{s,t}^{(n)})_{t\ge s}$ solve
the classical (distribution independent) SDE
\beq\label{EN}
\d X^{(n)}_{s,t}= b(t,X^{(n)}_{s,t}, \mu_{s,t}^{(n-1)})\,\d t + \si(t)\,\d Z_t,\quad  t\ge s,
\end{equation}
with $X_{s,s}^{(n)}=X_{s,s}$, where $\mu_{s,t}^{(n-1)}:=\L_{X_{s,t}^{(n-1)}}$.

\beg{lem} \label{L2.1} Assume \textbf{\upshape(H1)}-\textbf{\upshape(H4)}. Then for
every $n\in\N$, the SDE $\eqref{EN}$ has a unique strong solution $X^{(n)}_{s,t}$ with
\beq\label{*2} \E\sup_{t\in [s,T]} |X^{(n)}_{s,t}|^\theta<\infty,\quad T>s, n\in\N.\end{equation} Moreover, for any $T>0$, there exists $t_0>0$ such that for all $s\in [0,T]$ and $X_{s,s}\in L^\theta(\OO\to\mathbb{R}^d;\scr F_s)$,
$$
\E \sup_{ t\in [s, s+t_0]} |X^{(n+1)}_{s,t}-X^{(n)}_{s,t}|^\theta\le 2^\theta \e^{-n}  \E\sup_{t\in [s,s+t_0]} |X^{(1)}_{s,t}|^\theta,\quad n\in\N.
$$
\end{lem}

\beg{proof} We only need to apply
Proposition \ref{LT} in the appendix and use the argument
in \cite[proof of Lemma 2.3]{W16} to
obtain the desired assertions. Here we omit
the details to save space.
\end{proof}

\begin{proof}[Proof of Theorem \ref{TEU}]

(1) First, we prove the existence of strong solution in $\scr P_\theta$.
For simplicity, we only consider  $s=0$ and denote
$X_{0,t}=X_t$, $t\ge 0$.

For $t>0$, let $\D_t$ be the family of all $\R^d$-valued c\`{a}dl\`{a}g functions on $[0,t]$ equipped
with the uniform norm. Since $\D_t$ is a Banach
space, so is $L^\theta(\Omega; \D_t)$.
Let $(X_t)_{t\in [0,t_0]}$ be the unique limit of $(X_t^{(n)})_{t\in [0,t_0]}$ in  Lemma \ref{L2.1}. Then  $(X_t)_{t\in [0,t_0]}$ is an adapted c\`{a}dl\`{a}g process  and  satisfies
\beq\label{A01}\lim_{n\to\infty} \sup_{t\in [0,t_0]} \W_\theta(\mu_t^{(n)},\L_{X_t})^\theta\le \lim_{n\to\infty} \E \sup_{t\in [0,t_0]}  |X^{(n)}_t- X_t|^\theta=0.
\end{equation}
Reformulate \eqref{EN} as
$$X^{(n)}_t=X_0+ \int_0^t  b(s,X^{(n)}_s,\mu_s^{(n-1)})\,\d s +\int_0^t\si(s)\,\d Z_s. $$
Now \eqref{A01}, \textbf{(H2)}, the local boundedness of $b$, and the dominated convergence theorem imply that $\P$-a.s.
$$X_t= X_0+\int_0^t b(s,X_s, \L_{X_s})\,\d s +\int_0^t\si(s)\,\d Z_s,\quad t\in [0,t_0].$$
Moreover, \eqref{*2} and \eqref{A01} lead to $ \E \sup_{s\in [0,t_0]} |X_s|^\theta<\infty$. Therefore, $(X_t)_{t\in [0,t_0]}$ solves \eqref{E1} up to time $t_0$.
The same assertion holds for
$(X_{s,t})_{t\in [s,(s+t_0)\land T]}$ and $s\in [0,T]$.
By solving the equation piecewise in time, and using the arbitrariness of $T>0$,
  we conclude that  \eqref{E1} has a unique strong solution
  $(X_t)_{t\ge 0}$ with
  $$ \E \sup_{s\in [0,t]}|X_s|^\theta<\infty,\ \ \ t\geq 0.$$

Next, we prove strong uniqueness in $\scr P_\theta$.
Let $X_t$ and $Y_t$ be two solutions to \eqref{E1} with $X_0=Y_0$ and $\E|X_t|^\theta+\E|Y_t|^\theta<\infty$, $t\geq0$.
It follows from \textbf{(H3)} that
$$
\d |X_t-Y_t|^2\leq\kappa_1(t)|X_t-Y_t|^2\,\d t+\kappa_2(t)\W_\theta(\L_{X_t},\L_{Y_t})|X_t-Y_t|\,\d t.
$$
For any $\varepsilon>0$, it is easy to see that
\begin{equation}\label{df334d}
\begin{aligned}
    \d(|X_t-Y_t|^2+\varepsilon)^{\frac{\theta}{2}}
    &=\frac{\theta}{2}\,
    (|X_t-Y_t|^2+\varepsilon)^{\frac{\theta-2}{2}}\,\d|X_t-Y_t|^2\\
    &\leq\frac{\theta}{2}\,\kappa_1(t)
(|X_t-Y_t|^2+\varepsilon)^\frac{\theta-2}{2}|X_t-Y_t|^2\,\d t\\
&\quad+\frac{\theta}{2}\,\kappa_2(t)\W_\theta(\L_{X_t},\L_{Y_t})
(|X_t-Y_t|^2+\varepsilon)^{\frac{\theta-1}{2}}\,\d t.
\end{aligned}
\end{equation}
Using the following inequality
\begin{equation}\label{elementary1}
yz^{\rho-1}\leq\frac{1}{\rho}\,y^{\rho}+\frac{\rho-1}{\rho}\,
z^\rho,\quad y,z\geq0,\,\rho\geq1
\end{equation}
with $\rho=\theta$, we get
\begin{align*}
(|X_t-Y_t|^2+\varepsilon)^{\frac{\theta}{2}}
    &\leq(|X_0-Y_0|^2+\varepsilon)^{\frac{\theta}{2}}+\int_0^t\frac{\theta}{2}\,\kappa_1(s)
(|X_s-Y_s|^2+\varepsilon)^\frac{\theta-2}{2}|X_s-Y_s|^2\,\d s
    \\&\quad+\int_0^t\frac{\theta-1}{2}\kappa_2(s)(|X_s-Y_s|^2+\varepsilon)^{\frac{\theta}{2}}\,\d s+\int_0^t\frac12\,\kappa_2(s)\W_\theta(\L_{X_s},\L_{Y_s})
    ^\theta\,\d s.
\end{align*}
Letting $\varepsilon\downarrow0$, using the fact that $\W_\theta(\L_{X_s},\L_{Y_s})^\theta\le
\E|X_s-Y_s|^\theta$, and
taking expectations on both sides, we obtain
\begin{align}\label{X-Y}
\E|X_t-Y_t|^\theta\leq\E|X_0-Y_0|^\theta+\frac{\theta}{2}\int_0^t(\kappa_1(s)+\kappa_2(s))\E|X_s-Y_s|^\theta
\,\d s,
\end{align}
which, together with Gronwall's inequality and $X_0=Y_0$, implies that
$$\E|X_t-Y_t|^\theta=0\quad \text{for all $t\geq0$}.$$
Thus, strong uniqueness in $\scr P_\theta$ for \eqref{E1} follows.

(2) Let   $(X_t)_{t\ge 0}$ solve \eqref{E1} with $\scr L_{X_0}=\mu_0$, and let   $(\tt X_t,\tt Z_t)_{t\ge 0}$ on
$(\tt\OO, \{\tt\F_t\}_{t\ge 0}, \tt\P)$ be a weak solution of \eqref{E1}  such that  $\L_{\tt X_0}|_{\tt\P}=\mu_0$, i.e.
$(\tt X_t)_{t\ge 0}$ solves
\begin{equation}\label{j4dd2s} \d \tt X_t = b(t,\tt X_t, \L_{\tt X_t}|_{\tt\P})\,\d t + \si(t)\,\d \tt Z_t,\quad \scr L_{\tt X_0}|_{\tt\P}=\mu_0.\end{equation}
Moreover, $\L_{X_t|\P}, \L_{\tilde{X}_t|\tilde{\P}}\in \scr P_{\theta}, t\geq 0$. We need to   prove
 $\L_{X_t}|_{\P}=\L_{\tt X_t}|_{\tt\P}$ for all $t\geq0$.
  Let $\mu_t= \L_{X_t}|_{\P}$ and
$$\bar{b}(t,x)= b(t,x, \mu_t),\quad t\geq0,x\in\R^d.$$
  By \textbf{\upshape(H1)}-\textbf{\upshape(H4)} and Proposition \ref{LT} below,
  the following SDE
\beq\label{E10} \d  \bar{X}_t = \bar{b}(t,\bar{X}_t)\,\d t
+ \si(t)\,\d \tt Z_t,\ \  \bar{X}_0= \tt X_0 \end{equation}
has a unique strong solution.
According to Yamada--Watanabe's theory for SDEs driven by
jump processes (cf. \cite[Theorem 1]{BLP15}),
it also satisfies weak uniqueness. Noting that
$$\d X_t= \bar{b}(t,X_t)\,\d t + \si(t)\,\d  Z_t,\quad \L_{X_0}|_{\P}= \L_{ \bar{X}_0}|_{\tt\P},$$ the weak uniqueness of \eqref{E10} implies
\beq\label{HW} \L_{\bar{X}_t}|_{\tt\P}= \L_{X_t}|_{\P}=\mu_t,\quad t\geq0.\end{equation}
So, \eqref{E10} can be rewritten as
$$ \d \bar{X}_t = b(t,\bar{X}_t,
\L_{\bar{X}_t}|_{\tt\P})\,\d t + \si(t)\,\d \tt Z_t,\quad \bar{X}_0=\tt X_0.$$
Since it follows from (1) that \eqref{j4dd2s}
has a strong well-posedness in $\scr P_{\theta}$,
we know that $\bar{X}=\tt X$. Therefore,  \eqref{HW}
implies $\L_{\tt X_t}|_{\tt \P} = \L_{X_t}|_{\P}$ for
all $t\geq0$, as required.
\end{proof}

For $\mu_0\in\scr P_\theta$, let $X_t(\mu_0)$ be the solution to \eqref{E1} with $\L_{X_0}=\mu_0$.
Let $P^\ast_t\mu_0$ be the distribution of  $X_t(\mu_0)$.

\begin{prp}\label{Wt}
Assume \textbf{\upshape(H1)}-\textbf{\upshape(H4)}.
For any $\mu_0,\nu_0\in\scr P_\theta$,
\begin{align}\label{wtp}
\mathbb{W}_\theta(P_t^\ast \mu_0,P_t^\ast \nu_0)\leq \exp\left[
\frac{1}{2}\int_0^t
   \left\{\kappa_1(s)+\kappa_2(s)\right\}
   \,\d s\right]
\mathbb{W}_\theta(\mu_0,\nu_0),\quad t\geq0.
\end{align}
\end{prp}

\begin{proof}
It follows from \eqref{X-Y} and Gronwall's inequality that
$$\E|X_t-Y_t|^\theta\leq \E|X_0-Y_0|^\theta
\exp\left[
\frac{\theta}{2}\int_0^t
   \left\{\kappa_1(s)+\kappa_2(s)\right\}
   \,\d s\right].$$
For any $\mu_0,\nu_0\in \scr P_\theta$, we can take $\F_0$-measurable random variables $X_0$ and $Y_0$ such that $\L_{X_0}=\mu_0$, $\L_{Y_0}=\nu_0$ and
$\W_\theta(\mu_0, \nu_0)^\theta=\E|X_0-Y_0|^\theta$.
Combining this with
$\W_\theta(P_t^*\mu_0, P_t^*\nu_0)^\theta\le
\E|X_t(\mu_0)-Y_t(\nu_0)|^\theta$, we obtain the desired
assertion.
\end{proof}

\section{Harnack inequalities}

In this section, we study the Harnack inequality
for \eqref{sbb}. In this case, the L\'{e}vy
noise $(Z_t)_{t\geq0}$ is
given by subordinate Brownian motion
$(W_{S_t})_{t\geq0}$, where $W=\{W_t\}_{t\geq 0}$
is a standard Brownian motion on $\R^d$,
and $S=\{S_t\}_{t\geq 0}$ is an independent
subordinator with Bernstein
function (Laplace exponent) $\phi$ given by \eqref{bern}.
Since the L\'{e}vy measure of $Z_t=W_{S_t}$ is
$$
    \nu_Z(\d x)=\int_{(0,\infty)}(2\pi s)^{-d/2}
    \e^{-|x|^2/(2s)}\,\nu_S(\d s)\,\d x,
$$
where $\nu_S$ is the L\'{e}vy measure of subordinator
$S$, it is not hard to verify that
\textbf{(H1)} is equivalent to
\beg{enumerate}
\item[\textbf{(H1$'$)}] $\int_{(1,\infty)}x^{\theta/2}\,\nu_S(\d x)<\infty$.
\end{enumerate}

\begin{rem}
    We list here some typical examples for Bernstein
    function $\phi$
    satisfying \textbf{\upshape(H1$'$)}.
    \begin{itemize}
        \item   \textup{(}Stable subordinators\textup{)} Let $\phi(r)=r^{\alpha}$ with drift $\varrho=0$
            and L\'{e}vy measure $\nu_S(\d x)=
            \frac{\alpha}{\Gamma(1-\alpha)}\,x^{-1-\alpha}
            \,\d x$, where $1/2<\alpha<1$. Then
            \textbf{\upshape(H1$'$)} holds
            if $1\leq\theta<2\alpha$;

        \item \textup{(}Relativistic stable subordinators\textup{)} Let $\phi(r)
    =(r+m^{1/\alpha})^{\alpha}-m$ with drift
    $\varrho=0$ and L\'{e}vy measure
    $\nu_S(\d x)=\frac{\alpha}{\Gamma(1-\alpha)}\,
    \e^{-m^{1/\alpha}x}x^{-1-\alpha}\,\d x$,
    where $0<\alpha<1$ and $m>0$. Then
            \textbf{\upshape(H1$'$)} holds for all  $\theta\geq1$;

        \item \textup{(}Gamma subordinators\textup{)} Let $\phi(r)=\log(1+r/a)$ with drift $\varrho=0$
            and L\'{e}vy measure
    $\nu_S(\d x)=x^{-1}\e^{-ax}\,\d x$, where $a>0$.
    Then \textbf{\upshape(H1$'$)} holds for all  $\theta\geq1$;

        \item   Let $\phi(r)=r\log(1+a/r)$ with
            drift $\varrho=0$ and L\'{e}vy measure
    $\nu_S(\d x)=x^{-2}(1-\e^{-ax}(1+ax))\,\d x$, where $a>0$.
    Then \textbf{\upshape(H1$'$)} holds if  $1\leq\theta<2$;

    \item   Let $\phi(r)=r\e^r\int_1^\infty\e^{-ry}y^{-n}\,
    \d y$ with drift $\varrho=0$ and L\'{e}vy
    measure $\nu_S(\d x)=n(1+x)^{-n-1}\,\d x$, where $n\in\N$.
    Then \textbf{\upshape(H1$'$)} holds if  $1\leq\theta<2n$.
    \end{itemize}
    We refer to \cite[Chapter 16]{SSV12} for an extensive
    list of such Bernstein functions.
\end{rem}

Moreover, we need the following assumption on $\sigma$:
\beg{enumerate}
\item[\textbf{(H5)}] For any $t\geq 0$, $\sigma(t)$ is invertible and there exists a non-decreasing function $\lambda:[0,\infty)\to[0,\infty)$ such that $$\|\sigma(t)^{-1}\|\leq \lambda(t),\quad t\geq0.$$
\end{enumerate}

For $t>0$, let
$$
K_1(t):=\exp\left[-\int_0^t\kappa_1(r)\,\d r
\right],
$$
and
$$
K(t,\theta):=\frac12\int_0^t\exp\left[
\frac{\theta}{2}\big\{\kappa_1(s)+\kappa_2(s)\big\}
-\frac12\int_0^s\kappa_1(r)\,\d r
\right]\kappa_2(s)\,\d s,
$$
where $\kappa_1$ and $\kappa_2$ are from \textbf{(H3)}.

Under \textbf{\upshape(H1$'$)} and \textbf{\upshape(H2)}-\textbf{\upshape(H5)}, it follows
from Theorem \ref{TEU} that for $\mu_0\in\scr P_\theta$,
equation \eqref{sbb} with $\L_{X_0}=\mu_0$ has a unique
solution $X_t(\mu_0)$. Define
$$P_t f(\mu_0):=\mathbb{E}f(X_t(\mu_0)),
\quad t\geq0,f\in\B_b(\mathbb{R}^d).$$
Note that, in general, $(P_t)_{t\geq0}$ is not a semigroup,
see \cite{W16}.

The main result in this section is the following theorem.

\begin{thm}\label{T3.2} Assume \textbf{\upshape(H1$'$)} and \textbf{\upshape(H2)}-\textbf{\upshape(H5)}.

  \smallskip\noindent\textup{(1)}
  For any    $ \mu_0,\nu_0\in \scr P_{\theta}$,
  $T>0$, and $f\in \B_b(\mathbb{R}^d)$ with $f\geq1$,
\begin{align*}
&P_T\log f(\nu_0)\\
&\leq\log P_Tf(\mu_0)
    +\lambda(T)^2\left\{\mathbb{W}_2(\mu_0,\nu_0)^2
    +K(T,\theta)^2\mathbb{W}_\theta(\mu_0,\nu_0)^2\right\}
    \E\left(
    \int_0^{T}K_1(s)
\,\d S_s
    \right)^{-1}.
\end{align*}

\smallskip\noindent\textup{(2)}
For any  $p>1$,  $ \mu_0,\nu_0\in \scr P_\theta $,    $\F_0$-measurable random variables
$X_0, Y_0$ with $\L_{X_0}=\mu_0, \L_{Y_0}=\nu_0$, $T>0$,
and non-negative
$f\in \B_b(\mathbb{R}^d)$,
\begin{align*}
    &\big(P_Tf(\nu_0)\big)^p
    \leq P_Tf^p(\mu_0)\\
   & \qquad\times\left(\E
    \exp\left[\frac{p\lambda(T)^2}{(p-1)^2}
    \left\{|X_0-Y_0|^2
+K(T,\theta)^2\mathbb{W}_\theta(\mu_0,\nu_0)^2\right\}
    \left(
    \int_0^{T}K_1(s)
\,\d S_s
    \right)^{-1}
    \right]
    \right)^{p-1}.
\end{align*}
 \end{thm}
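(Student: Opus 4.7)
My plan is to combine the absolutely-continuous approximation of the subordinator from \cite{WW,DH,ZXC} with the coupling-by-change-of-measure argument for McKean--Vlasov SDEs from \cite{W16}. Fix $T>0$. For each $\vv\in(0,1)$ introduce the absolutely continuous approximation $S_t^\vv:=\vv t+\vv^{-1}\int_t^{t+\vv}S_r\,\d r$, so that $\dot S_t^\vv>0$, $S_t^\vv\to S_t$ a.s., and $\int_0^T K_1(s)\,\d S_s^\vv\to\int_0^T K_1(s)\,\d S_s$ a.s. Since $(W_{S_t^\vv})$ is a continuous square-integrable martingale with quadratic variation $S_t^\vv I_d$, on an enlarged filtration we may write $W_{S_t^\vv}=\int_0^t\ss{\dot S_s^\vv}\,\d B_s^\vv$ for a Brownian motion $B^\vv$ independent of $S$. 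Let $(X_t^\vv)$ solve the resulting approximated McKean--Vlasov SDE $\d X_t^\vv=b(t,X_t^\vv,\mu_t^\vv)\,\d t+\si(t)\ss{\dot S_t^\vv}\,\d B_t^\vv$ with $\L_{X_0}=\mu_0$ and $\mu_t^\vv:=\L_{X_t^\vv}$; denote by $\nu_t^\vv$ the analogous law for the solution starting from $Y_0\sim\nu_0$.

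Fix a coupling $(X_0,Y_0)$ of $\mu_0,\nu_0$ (optimal in $\W_2$ for (1), prescribed for (2)), and on the same probability space define
\[
\d Y_t^\vv=\bigl\{b(t,Y_t^\vv,\nu_t^\vv)+u_t^\vv\bigr\}\,\d t+\si(t)\ss{\dot S_t^\vv}\,\d B_t^\vv,\quad Y_0^\vv=Y_0,
\]
with the correction drift $u_t^\vv$ chosen parallel to $X_t^\vv-Y_t^\vv$ (any component orthogonal would waste Girsanov mass, since \textbf{(H3)} provides only an inner-product estimate on $b$-differences). Writing $u_t^\vv=\alpha(t)(X_t^\vv-Y_t^\vv)+\beta(t)(X_t^\vv-Y_t^\vv)/|X_t^\vv-Y_t^\vv|$ and applying \textbf{(H3)}, one finds
\[
\frac{\d |X_t^\vv-Y_t^\vv|}{\d t}\le\ff{\kk_1(t)-2\alpha(t)}{2}|X_t^\vv-Y_t^\vv|+\ff{\kk_2(t)\W_\theta(\mu_t^\vv,\nu_t^\vv)-2\beta(t)}{2}.
\]
Bounding $\W_\theta(\mu_t^\vv,\nu_t^\vv)$ by Theorem~\ref{Wt}(1), I then tune $\alpha$ and $\beta$ so that $|X_T^\vv-Y_T^\vv|=0$ and, via a Cauchy--Schwarz-type optimisation (with the decay profile of $|X_t^\vv-Y_t^\vv|$ slaved to $\int_t^T K_1\,\d S^\vv$ and the time-profile of $\beta$ tuned against $K_1$-weighted moments), so that the integrated cost satisfies
\[
\int_0^T\ff{|u_t^\vv|^2}{\dot S_t^\vv}\,\d t\;\lesssim\;\ff{|X_0-Y_0|^2+K(T,\theta)^2\W_\theta(\mu_0,\nu_0)^2}{\int_0^T K_1(s)\,\d S_s^\vv}.
\]

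Setting $\rho_t^\vv:=(\si(t)\ss{\dot S_t^\vv})^{-1}u_t^\vv$ (so $|\rho_t^\vv|^2\le\lambda(t)^2|u_t^\vv|^2/\dot S_t^\vv$ by \textbf{(H5)}) and $R_T^\vv:=\exp(-\int_0^T\rho_s^\vv\cdot\d B_s^\vv-\ff12\int_0^T|\rho_s^\vv|^2\,\d s)$, Girsanov's theorem produces a measure $\Q^\vv$ with density $R_T^\vv$ under which $Y^\vv$ solves the approximated McKean--Vlasov SDE with initial law $\nu_0$; combined with the coupling $Y_T^\vv=X_T^\vv$, this yields $P_T^\vv f(\nu_0)=\E[f(X_T^\vv)R_T^\vv]$. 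For~(1) the log-Harnack follows from the variational inequality $\E^{\Q^\vv}\log f(X_T^\vv)\le\log\E f(X_T^\vv)+H(\Q^\vv|\P)$ together with the entropy identity $H(\Q^\vv|\P)=\ff12\E^{\Q^\vv}\int_0^T|\rho_s^\vv|^2\,\d s$; for~(2), H\"older gives $(P_T^\vv f(\nu_0))^p\le P_T^\vv f^p(\mu_0)(\E R_T^{\vv,p/(p-1)})^{p-1}$, and using independence of $S$ and $B^\vv$ (first conditioning on $S$ and evaluating the Gaussian expectation) one gets $\E R_T^{\vv,p/(p-1)}\le\E\exp[\ff{p}{2(p-1)^2}\int_0^T|\rho_s^\vv|^2\,\d s]$. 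Finally let $\vv\to 0$: $X_t^\vv\to X_t$, $\int_0^T K_1\,\d S_s^\vv\to\int_0^T K_1\,\d S_s$ a.s., and dominated convergence transfers both estimates to the original equation.

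The hardest step is the design of $u^\vv$ in the coupling: \textbf{(H3)} forces $u^\vv$ to be parallel to $X^\vv-Y^\vv$, and three competing demands must be met simultaneously --- enforcing $X_T^\vv=Y_T^\vv$, producing the additive decomposition $|X_0-Y_0|^2+K(T,\theta)^2\W_\theta(\mu_0,\nu_0)^2$ in the numerator, and realising $\int_0^T K_1\,\d S^\vv$ in the denominator --- which requires an intricate joint choice of the decay profile of $|X_t^\vv-Y_t^\vv|$ and the time-profile of $\beta(t)$. A secondary technicality is that $(X^\vv-Y^\vv)/|X^\vv-Y^\vv|$ is ill-defined at the coupling time, but the actual drift contribution $\beta\cdot(X^\vv-Y^\vv)/|X^\vv-Y^\vv|$ has magnitude $\beta(t)$ and stays bounded.
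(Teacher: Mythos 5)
Your plan follows the paper's broad strategy (regularize the subordinator path, construct an additive coupling, apply Girsanov conditioned on the subordinator, then pass to the limit and integrate over $S$), but there are two genuine gaps that the paper's proof is specifically organized to avoid.

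First, you set up the auxiliary equations with the \emph{approximated} laws $\mu^\vv_t=\L_{X^\vv_t}$ and $\nu^\vv_t$ in the drift, and then attempt to bound $\W_\theta(\mu^\vv_t,\nu^\vv_t)$ by Theorem~\ref{Wt}(1). Neither step is justified: Theorem~\ref{Wt}(1) is a statement about the original McKean--Vlasov flow $P_t^*$, not about the time-changed process, and keeping the McKean--Vlasov structure during the approximation also breaks the Girsanov step, since the law of the coupled process $Y^\vv$ changes under the new measure $\Q^\vv$, so $\nu^\vv_t$ is no longer the right argument of $b$ after the change of measure. The paper circumvents both issues at once by freezing the distribution arguments to the laws $\L_{X_t(\mu_0)}$ and $\L_{X_t(\nu_0)}$ of the \emph{true} (unapproximated, un-time-changed) McKean--Vlasov solutions; see \eqref{jddc23} and \eqref{EY}. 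These laws are deterministic, independent of the subordinator path and of the approximation parameter, so the equations for $X^{\ell^\varepsilon}$, $Y$ are ordinary (distribution-independent) SDEs, Girsanov applies cleanly, $\W_\theta(\mu_t,\nu_t)$ is bounded directly by Theorem~\ref{Wt}(1), and crucially the identity $P_T f(\cdot)=\E\bigl[P_T^\ell f(\cdot)\big|_{\ell=S}\bigr]$ used at the very end holds \emph{because} the drift's measure argument does not depend on $S$.

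Second, the step ``let $\vv\to 0$: $X^\vv_t\to X_t$ a.s.\ and dominated convergence transfers both estimates'' is glossed over and is not valid under the standing hypotheses. The convergence $X_t^{\ell^\varepsilon}\to X_t^\ell$ does not follow from \textbf{(H3)} alone; the paper devotes Lemma~\ref{appro11} and the three-step approximation in the proof of Proposition~\ref{timechange} precisely to this. One first assumes \textbf{(A1)} (piecewise constant $\sigma$) and \textbf{(A2)} (uniform Lipschitz drift) to get a.s.\ convergence via Gronwall, then removes \textbf{(A1)} by $L^2$-approximation of $\sigma$, and finally removes \textbf{(A2)} using a Yosida-type dissipative regularization $\tilde b^{(\varepsilon)}$ of the drift. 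Without this chain of reductions the passage to the limit is a genuine gap. (A minor point: the extra term $\alpha(t)(X^\vv_t-Y^\vv_t)$ in your coupling drift is harmless but superfluous; the contraction provided by $\kappa_1$ in \textbf{(H3)} already supplies the exponential weight $K_1$, and the paper's $\xi(t)\frac{X-Y}{|X-Y|}\,\d\ell^\varepsilon_t$ alone, with $\xi$ as in the proof of Lemma~\ref{L3.3}, is exactly what produces the stated constants.)
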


For $\mu_0,\nu_0\in\scr P_\theta$ and $t>0$, let
$\mu_t:=P_t^\ast \mu_0$ and $ \nu_t:=P_t^\ast\nu_0$.
The following corollary is a direct consequence of
Theorem \ref{T3.2}, see \cite[Theorem 1.4.2]{Wbook}.

\beg{cor}\label{C1.2} Assume \textbf{\upshape(H1$'$)} and \textbf{\upshape(H2)}-\textbf{\upshape(H5)}. \
Let $\mu_0,\nu_0\in\scr P_{\theta\vee2}$ and $T>0$.
If $\E S_T^{-1}<\infty$, then $\mu_T$ and $\nu_T$
are equivalent. Furthermore, the following assertions hold.

\smallskip\noindent\textup{(1)}
It holds that
$$
\int_{\R^d}\log\left(\ff{\d\nu_T}{\d\mu_T}\right)\,
 \d\nu_T \le \left\{\mathbb{W}_2(\mu_0,\nu_0)^2
    +K(T,\theta)^2\mathbb{W}_\theta(\mu_0,\nu_0)^2\right\}
    \E\left(
    \int_0^{T}K_1(s)
\,\d S_s
    \right)^{-1}.
$$

\smallskip\noindent\textup{(2)}
For any  $p>1$ and   $\F_0$-measurable random variables
$X_0, Y_0$ with $\L_{X_0}=\mu_0, \L_{Y_0}=\nu_0$,
\begin{align*}
& \int_{\R^d}\left(\ff{\d\nu_T}{\d\mu_T}\right)^{1/(p-1)}
 \,\d\nu_T\\
 &\le\E
    \exp\left[\frac{p}{(p-1)^2}
    \left\{|X_0-Y_0|^2
+K(T,\theta)^2\mathbb{W}_\theta(\mu_0,\nu_0)^2\right\}
    \left(
    \int_0^{T}K_1(s)
\,\d S_s
    \right)^{-1}
    \right].
\end{align*}
\end{cor}

\subsection{Harnack inequalities under deterministic time-change}

Let $\ell:[0,\infty)\rightarrow[0,\infty)$ be
a sample path of
subordinator $S$, which is a non-decreasing and
c\`{a}dl\`{a}g function with $\ell(0)=0$.
For $\mu_0\in \scr P_\theta$, let $X_t(\mu_0)$
be the solution to \eqref{E1}
with $\L_{X_0}=\mu_0$.
By \textbf{(H2)} and \textbf{(H3)},
$b(t,\cdot,\L_{X_t(\mu_0)})$ is continuous and satisfies the one-sided
Lipschitz condition
$$
2\langle  b(t,x,\L_{X_t(\mu_0)})- b(t,y,\L_{X_t(\mu_0)}), x-y\rangle
 \le \kappa_1(t) |x-y|^2,\quad t\geq0,x,y\in\R^d.
$$
Thus, for any $\mu_0\in \scr P_\theta$,
the following SDE has a unique non-explosive
solution with $\L_{X_0^\ell}=\mu_0$:
\begin{equation}\label{jddc23}
    \d X_t^\ell=b(t,X_t^\ell,\L_{X_t(\mu_0)})\,\d t
    +\sigma(t)\,\d W_{\ell_t}.
\end{equation}
We denote the solution by $X_t^\ell(\mu_0)$.
The associated Markov operator is defined by
\begin{equation}\label{semig}
P_t^\ell f(\mu_0):=\E f\big(X_t^\ell(\mu_0)\big),
\quad t\geq0,f\in\scr B_b(\mathbb{R}^d),
\mu_0\in\scr P_\theta.
\end{equation}

\begin{prp}\label{timechange} Assume
\textbf{\upshape(H1$'$)} and \textbf{\upshape(H2)}-\textbf{\upshape(H5)}.

  \smallskip\noindent\textup{(1)}
  For any    $ \mu_0,\nu_0\in \scr P_\theta $,  $T>0$,
and $f\in \B_b(\mathbb{R}^d)$ with $f\geq1$, it holds
$$
P^{\ell}_T\log f(\nu_0)\leq\log P_T^{\ell} f(\mu_0)
    +\lambda(T)^2\left\{\mathbb{W}_2(\mu_0,\nu_0)^2
    +K(T,\theta)^2\mathbb{W}_\theta(\mu_0,\nu_0)^2\right\}
    \left(
    \int_0^{T}K_1(s)
\,\d \ell_s
    \right)^{-1}.
$$

\smallskip\noindent\textup{(2)}
For any    $p>1$, $ \mu_0,\nu_0\in \scr P_\theta $,    $\F_0$-measurable random variables
$X_0, Y_0$ with $\L_{X_0}=\mu_0, \L_{Y_0}=\nu_0$, $T>0$,
and non-negative
$f\in \B_b(\mathbb{R}^d)$, we have
\begin{align*}
    \big(P_T^{\ell}f(\nu_0)\big)^p
    &\leq P_T^{\ell}f^p(\mu_0)
    \cdot
    \left(\E
    \exp\left[\frac{p\lambda(T)^2}{(p-1)^2}
    \,|X_0-Y_0|^2
    \left(
    \int_0^{T}K_1(s)
\,\d \ell_s
    \right)^{-1}
    \right]
    \right)^{p-1}\\
    &\qquad\times
    \exp\left[\frac{p\lambda(T)^2}{p-1}
    \,K(T,\theta)^2\mathbb{W}_\theta(\mu_0,\nu_0)^2
    \left(
    \int_0^{T}K_1(s)
\,\d \ell_s
    \right)^{-1}
    \right].
\end{align*}
 \end{prp}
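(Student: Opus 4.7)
The proof uses the coupling-by-change-of-measure scheme of F.-Y.~Wang. A first key observation is that $(\L_{X_t(\mu_0)})_{t\ge 0}$ is a \emph{fixed} measure-valued flow, coming from the McKean-Vlasov equation \eqref{sbb} whose solution is unaffected by the deterministic path $\ell$; thus \eqref{jddc23} is a classical (distribution-independent) SDE with one-sided-Lipschitz drift driven by the time-changed Brownian motion $W_{\ell_{\cdot}}$, to which the standard Harnack machinery applies. The plan is to build a single stochastic basis on which a process $X_t$ has the $\mu_0$-dynamics under $\P$ and a coupled process $\bar Y_t$ has the $\nu_0$-dynamics under an auxiliary measure $\Q$, with $\bar Y_T=X_T$ $\P$-almost surely, so that
\[
P_T^\ell f(\nu_0)=\E_\Q f(\bar Y_T)=\E_\Q f(X_T)=\E\bigl[R_T f(X_T)\bigr],
\]
where $R_T:=\d\Q/\d\P|_{\F_T}$.

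Concretely, take $\F_0$-measurable $(X_0,Y_0)$ with marginals $(\mu_0,\nu_0)$ (optimally $\W_2$-coupled for part (1)), let $X_t$ solve \eqref{jddc23} from $X_0$, and set
\[
\d \bar Y_t=b(t,\bar Y_t,\L_{Y_t(\nu_0)})\,\d t+\sigma(t)\,\d W_{\ell_t}+\sigma(t)\eta_t\,\d\ell_t,\quad \bar Y_0=Y_0.
\]
Decompose $\eta_t=\eta_t^{(1)}+\eta_t^{(2)}$: take $\sigma(t)\eta_t^{(1)}=K_1(t)(X_t-\bar Y_t)/\int_t^T K_1(s)\,\d\ell_s$ to be a Wang-type clock-rescaled feedback on $\zeta_t=X_t-\bar Y_t$, and $\eta_t^{(2)}$ to be a deterministic correction compensating the Wasserstein-dependent component of the drift difference. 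The essential input for constructing $\eta^{(2)}$ is the pointwise bound
\[
|b(t,x,\mu)-b(t,x,\nu)|\le\tfrac12\,\kappa_2(t)\,\W_\theta(\mu,\nu),
\]
obtained by setting $y=x+\varepsilon v$ in \textbf{(H3)}, dividing by $\varepsilon$, and sending $\varepsilon\downarrow 0$. Combined with the a priori bound $\W_\theta(\L_{X_s(\mu_0)},\L_{Y_s(\nu_0)})\le\exp[\tfrac12\int_0^s(\kappa_1+\kappa_2)\,\d r]\W_\theta(\mu_0,\nu_0)$ from Theorem~\ref{Wt}(1), this allows one to enforce $\bar Y_T=X_T$ and yields the key $L^2(\d\ell)$-estimates
\[
\int_0^T|\eta_s^{(1)}|^2\,\d\ell_s\le\frac{\lambda(T)^2|X_0-Y_0|^2}{M_0},\qquad \int_0^T|\eta_s^{(2)}|^2\,\d\ell_s\le\frac{\lambda(T)^2K(T,\theta)^2\W_\theta(\mu_0,\nu_0)^2}{M_0},
\]
where $M_0:=\int_0^T K_1(s)\,\d\ell_s$; indeed the precise form of $K(T,\theta)$ is exactly what emerges from this computation.

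Now Girsanov's theorem for the continuous square-integrable martingale $W_{\ell_{\cdot}}$ (with predictable bracket $\ell_t I_d$) makes $W^\Q_{\ell_t}:=W_{\ell_t}+\int_0^t\eta_s\,\d\ell_s$ a $\Q$-martingale with the same bracket, so $\bar Y$ under $\Q$ satisfies the $\nu_0$-version of \eqref{jddc23}; weak uniqueness then forces the $\Q$-law of $\bar Y_T$ to coincide with the $\P$-law of $Y_T^\ell(\nu_0)$. For part (1), combine Young's entropy inequality $\E[R_T\log f(X_T)]\le\log\E f(X_T)+\E_\Q\log R_T$ with $\E_\Q\log R_T=\tfrac12\E_\Q\!\int_0^T|\eta_s|^2\,\d\ell_s$ and the $L^2$-bounds above; a final $\P$-expectation over the $\W_2$-optimal coupling yields the log-Harnack. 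For part (2), apply H\"older's inequality with conjugate exponents $(p,p/(p-1))$ in $\E[R_Tf(X_T)]$ to obtain $(P_T^\ell f(\nu_0))^p\le P_T^\ell f^p(\mu_0)(\E_\Q R_T^{1/(p-1)})^{p-1}$; under $\Q$, $R_T^{1/(p-1)}$ factors as an exponential $\Q$-martingale times $\exp[\tfrac{p}{2(p-1)^2}\int_0^T|\eta_s|^2\,\d\ell_s]$, and the \emph{deterministic} $\eta^{(2)}$-piece pulls out of the expectation to reproduce the $K(T,\theta)^2\W_\theta^2/M_0$-factor, leaving the random $|X_0-Y_0|^2$-piece under the remaining $\E$. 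The main technical obstacle is Step~2: exhibiting a deterministic $\eta^{(2)}$ that enforces $\bar Y_T=X_T$ pathwise and whose $L^2(\d\ell)$-norm reproduces $K(T,\theta)^2$ with the right constant; when $\ell$ has a singular component, this is first handled for absolutely continuous $\ell$ (where Girsanov easily absorbs the $\d t$-part of the drift correction) and then transferred to the general case by a density/approximation argument.
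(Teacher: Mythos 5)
Your overall strategy (coupling by change of measure, Girsanov for the time-changed Brownian motion, a clock-rescaled Wang-type feedback, Young/H\"{o}lder inequalities, and first regularizing $\ell$) matches the paper's, but there are two places where your sketch does not go through as written.

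First, the decomposition $\eta=\eta^{(1)}+\eta^{(2)}$ with $\eta^{(2)}$ \emph{deterministic} is not tenable. The drift mismatch you want $\eta^{(2)}$ to absorb is $\sigma(t)^{-1}\bigl(b(t,\bar Y_t,\L_{X_t(\mu_0)})-b(t,\bar Y_t,\L_{Y_t(\nu_0)})\bigr)$, whose \emph{norm} is bounded by $\tfrac12\kappa_2(t)\lambda(t)\W_\theta(\mu_t,\nu_t)$ (your pointwise estimate is correct and does follow from \textbf{(H2)}--\textbf{(H3)}), but whose \emph{direction} depends on the random state $\bar Y_t$, so no deterministic vector field can compensate it exactly. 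The paper avoids the issue entirely by using a single correction $\xi(t)\,(X_t-Y_t)/|X_t-Y_t|$ whose magnitude $\xi(t)$ combines $|X_0-Y_0|$ and $K(t,\theta)\W_\theta(\mu_0,\nu_0)$. The resulting bound $\<M\>_T\leq\lambda(T)^2\bigl(|X_0-Y_0|+K(T,\theta)\W_\theta(\mu_0,\nu_0)\bigr)^2\big/\int_0^T K_1\,\d\ell$ is then split with $(a+b)^2\leq 2a^2+2b^2$; the $\W_\theta$-term factors out of the exponential not because any part of the coupling drift is deterministic, but because this contribution to the bracket bound is a deterministic number, while the $|X_0-Y_0|^2$-term is $\F_0$-measurable and hence exits the inner (conditional) expectation cleanly. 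Your intended conclusion is correct, but the mechanism you invoke is not; you should reframe it as deterministic control of the $L^2(\d\ell)$-size of the correction rather than determinism of the correction itself.

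Second, the approximation argument is underspecified in a way that matters. Regularizing $\ell$ to an absolutely continuous, strictly increasing $\ell^\varepsilon$ makes Girsanov applicable, but you then have to pass to the limit $\varepsilon\downarrow 0$ in the conclusion, and this limit is where most of the delicate work lies. To get $X_t^{\ell^\varepsilon}\to X_t^\ell$ a.s., the paper first assumes $\sigma$ piecewise constant (so that the stochastic integrals $\int\sigma\,\d W_{\ell^\varepsilon}$ and $\int\sigma\,\d W_\ell$ differ at finitely many evaluation times of $W$) and $b$ Lipschitz in $x$ (to run a Gronwall argument for the pathwise difference), and then removes these extra assumptions by two more approximation stages: $\sigma_n\to\sigma$ in $L^2(\d\ell)$ with each $\sigma_n$ piecewise constant, and Yosida-type regularization $b^{(\varepsilon)}$ of the dissipative part of $b$ preserving the one-sided Lipschitz constant. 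A reader following your sketch would have no idea these layers are needed; "density/approximation argument" is hiding a nontrivial chunk of the proof.
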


Following the line of \cite{WW,ZXC,DH},
for $\varepsilon\in(0,1)$, consider the
following regularization of $\ell$:
$$\ell^\varepsilon_t:=\frac{1}{\varepsilon}
\int_{t}^{t+\varepsilon}\ell_s\,\d s+\varepsilon t
=\int_0^1\ell_{\varepsilon s+t}\,\d s+\varepsilon t,
\quad t\geq0.$$
It is clear that, for each $\varepsilon\in(0,1)$,
the function $\ell^\varepsilon$ is absolutely
continuous, strictly increasing and satisfies
for any $t\geq0$
\begin{equation}\label{approximation}
    \ell^\varepsilon_t\downarrow\ell_t\quad
    \text{as $\varepsilon\downarrow0$}.
\end{equation}
For $\mu_0\in\scr P_\theta$,
let $X_t^{\ell^\varepsilon}(\mu_0)$ be the
solution to the following SDE with $\L_{X_0^{\ell^\varepsilon}}=\mu_0$:
$$
\d X^{\ell^\varepsilon}_t=b(t,X^{\ell^\varepsilon}_t,
\L_{X_t(\mu_0)})\,\d t
    +\sigma(t)\,\d
    W_{\ell^\varepsilon_t-\ell^\varepsilon_0}.
$$
Define the associated Markov operator
$P_t^{\ell^\varepsilon}$ by \eqref{semig}
with $\ell$ replaced by $\ell^\varepsilon$.

\begin{lem}\label{L3.3} Fix $\varepsilon\in(0,1)$ and
assume \textbf{\upshape(H1$'$)} and \textbf{\upshape(H2)}-\textbf{\upshape(H5)}. Then the assertions in Proposition in \ref{timechange} hold
with $\ell$ replaced by $\ell^\varepsilon$.
\end{lem}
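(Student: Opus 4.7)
The plan is to reduce the SDE for $X_t^{\ell^\varepsilon}$ to a standard Brownian-motion-driven SDE by exploiting the absolute continuity of $\ell^\varepsilon$, and then carry out a coupling-by-change-of-measure argument in Wang's spirit. Since $\ell^\varepsilon$ is strictly increasing and absolutely continuous with density $\dot\ell^\varepsilon_t=\tfrac1\varepsilon(\ell_{t+\varepsilon}-\ell_t)+\varepsilon>0$, the process $t\mapsto W_{\ell^\varepsilon_t-\ell^\varepsilon_0}$ is a continuous martingale with quadratic variation $\int_0^t\dot\ell^\varepsilon_s\,\d s$, so by Dambis--Dubins--Schwarz there exists a standard Brownian motion $\tilde W$ on an enlarged filtration with $W_{\ell^\varepsilon_t-\ell^\varepsilon_0}=\int_0^t(\dot\ell^\varepsilon_s)^{1/2}\,\d\tilde W_s$. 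The SDE defining $X_t^{\ell^\varepsilon}(\mu_0)$ thus becomes
$$\d X_t^{\ell^\varepsilon}=b(t,X_t^{\ell^\varepsilon},\L_{X_t(\mu_0)})\,\d t+\sigma(t)(\dot\ell^\varepsilon_t)^{1/2}\,\d\tilde W_t,$$
a distribution-independent time-inhomogeneous SDE driven by $\tilde W$, since $\L_{X_t(\mu_0)}$ is a fixed input.

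Next, for $\F_0$-measurable $X_0\sim\mu_0$ and $Y_0\sim\nu_0$ on the same probability space, I couple $X_t:=X_t^{\ell^\varepsilon}(\mu_0)$ started from $X_0$ with a process $Y_t$ started from $Y_0$ satisfying
$$\d Y_t=[b(t,Y_t,\L_{X_t(\nu_0)})+\xi_t]\,\d t+\sigma(t)(\dot\ell^\varepsilon_t)^{1/2}\,\d\tilde W_t,$$
where the coupling drift $\xi_t$ will be specified below. Setting $h_s:=\sigma(s)^{-1}(\dot\ell^\varepsilon_s)^{-1/2}\xi_s$ and $R_T:=\exp\bigl(-\int_0^T\langle h_s,\d\tilde W_s\rangle-\tfrac12\int_0^T|h_s|^2\,\d s\bigr)$, Girsanov's theorem guarantees that, under $\Q:=R_T\P$, the process $(Y_t)_{t\le T}$ has the same law as $(X_t^{\ell^\varepsilon}(\nu_0))$ under $\P$, so
$$P^{\ell^\varepsilon}_Tg(\nu_0)=\E_\Q g(Y_T)=\E[R_Tg(X_T)]\qquad\text{for all bounded measurable }g.$$

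The technical core is the choice of $\xi_t$ and the resulting control of $\int_0^T|h_s|^2\,\d s$. I would try $\xi_t=c(t)\eta_t$ with $\eta_t:=X_t-Y_t$ and $c(t)=K_1(t)\dot\ell^\varepsilon_t\big/\int_t^TK_1(r)\,\d\ell^\varepsilon_r$. By \textbf{(H3)} and the a priori estimate $\W_\theta(\L_{X_t(\mu_0)},\L_{X_t(\nu_0)})\le H(t)\W_\theta(\mu_0,\nu_0)$ with $H(t):=\exp[\tfrac12\int_0^t(\kappa_1+\kappa_2)\,\d s]$ coming from Theorem \ref{Wt}(1), one obtains
$$\d|\eta_t|^2\le[\kappa_1(t)-2c(t)]|\eta_t|^2\,\d t+\kappa_2(t)H(t)\W_\theta(\mu_0,\nu_0)|\eta_t|\,\d t.$$
Solving the associated linear differential inequality for $|\eta_t|$ by variation of parameters (note $\int_0^t 2c\,\d s=\log u(0)/u(t)$ with $u(s):=\int_s^TK_1(r)\,\d\ell^\varepsilon_r$, so that the integrating factor vanishes at $t=T$) yields $\eta_T=0$ almost surely together with a pathwise estimate of the form
$$\int_0^T|h_s|^2\,\d s\le\lambda(T)^2\,\frac{|X_0-Y_0|^2+K(T,\theta)^2\W_\theta(\mu_0,\nu_0)^2}{\int_0^TK_1(s)\,\d\ell^\varepsilon_s},$$
where the first summand comes from the homogeneous solution (initial gap) and the second from the variation-of-parameters contribution of the $\kappa_2H$-inhomogeneity, whose accumulation is exactly captured by $K(T,\theta)$; the $\lambda(T)^2$ prefactor comes from \textbf{(H5)} via $\|\sigma(s)^{-1}\|\le\lambda(T)$ on $[0,T]$.

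With this key estimate in hand, assertion (1) follows from the entropy inequality $\E[R_T\log f(X_T)]\le\log\E[f(X_T)]+\E[R_T\log R_T]$ applied to $P^{\ell^\varepsilon}_T\log f(\nu_0)=\E[R_T\log f(X_T)]$, together with $\E_\Q\log R_T=\tfrac12\E_\Q\int_0^T|h_s|^2\,\d s$; choosing $(X_0,Y_0)$ as an optimal $\W_2$-coupling of $(\mu_0,\nu_0)$ produces the first term. Assertion (2) follows from H\"older's inequality applied to $P^{\ell^\varepsilon}_Tf(\nu_0)=\E[R_T f(X_T)]$ with exponents $p/(p-1)$ and $p$, using the identity $R_T^{p/(p-1)}=M_T\exp\bigl(\tfrac{p}{2(p-1)^2}\int_0^T|h_s|^2\,\d s\bigr)$ for a mean-one exponential martingale $M_T$: the deterministic $\W_\theta^2$-contribution factors out of the expectation, while the random $|X_0-Y_0|^2$-contribution remains inside, yielding the exponential moment form in the statement. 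The main obstacle is the design of $\xi_t$, which must simultaneously (a) force $\eta_T=0$ almost surely despite having only monotone (not Lipschitz) control of $b$, (b) produce a Girsanov integral that cleanly decomposes into an $|X_0-Y_0|^2$-piece and a $K(T,\theta)^2\W_\theta^2$-piece, and (c) scale against $\dot\ell^\varepsilon_t$ so that the denominator is $\int_0^TK_1(s)\,\d\ell^\varepsilon_s$ rather than $\int_0^TK_1(s)\,\d s$; the last point is precisely what will allow the subsequent passage $\varepsilon\downarrow0$ (via \eqref{approximation}) to deduce Proposition \ref{timechange}.
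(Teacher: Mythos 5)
Your high-level strategy (time-change to a Brownian SDE, coupling by change of measures, Girsanov, entropy inequality for log-Harnack, H\"older for power-Harnack) matches the paper's. The structural difference is the design of the coupling drift, and here your proposal has a genuine gap.

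You take $\xi_t=c(t)\eta_t$ with $c(t)=K_1(t)\dot\ell^\varepsilon_t\big/\int_t^T K_1\,\d\ell^\varepsilon_r$, a \emph{linear feedback} drift whose magnitude is proportional to $|\eta_t|$ and whose coefficient blows up as $t\to T$. The paper instead uses a drift of \emph{deterministic magnitude}, namely $\xi(t)\frac{\eta_t}{|\eta_t|}\I_{[0,\tau)}(t)\,\d\ell^\varepsilon_t$ with a deterministic scalar $\xi(t)$ whose denominator is the fixed constant $\int_0^T K_1\,\d\ell^\varepsilon_s$, not the vanishing tail integral. This difference is not cosmetic. With the unit-vector drift, $|\Phi(t)|\le\xi(t)$ holds \emph{pathwise and deterministically}, so the Girsanov quadratic variation is bounded by $\lambda(T)^2\int_0^T\xi(s)^2\,\d\ell^\varepsilon_s$, which evaluates immediately to the clean expression $\lambda(T)^2\{|X_0-Y_0|+K(T,\theta)\W_\theta\}^2\big(\int_0^T K_1\,\d\ell^\varepsilon\big)^{-1}$; the drift's size never feeds back through $|\eta_t|$. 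In your scheme, you must first bound $|\eta_t|$ by variation of parameters and then substitute into $|h_s|^2\le\lambda(T)^2 c(s)^2|\eta_s|^2/\dot\ell^\varepsilon_s$. Carrying this out gives $|h_s|^2\le\lambda(T)^2 K_1(s)\dot\ell^\varepsilon_s\,u(0)^{-2}\big[|X_0-Y_0|+\tfrac12\W_\theta\int_0^s\sqrt{K_1(r)}\,\tfrac{u(0)}{u(r)}\kappa_2(r)H(r)\,\d r\big]^2$, and the inner integral diverges logarithmically as $s\to T$ because $u(r)\sim \mathrm{const}\cdot(T-r)$. Upon integrating in $s$, you get a finite quantity, but with extra $\log$-corrections and cross terms; it is \emph{not} of the stated form $\lambda(T)^2\{|X_0-Y_0|^2+K(T,\theta)^2\W_\theta^2\}\big(\int_0^T K_1\,\d\ell^\varepsilon\big)^{-1}$. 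So the claimed pathwise Girsanov estimate, and consequently the final Harnack bounds, do not follow from your choice of $\xi_t$.

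There is also a minor unaddressed point: with $c(t)\to\infty$ at $t=T$ and a drift proportional to $\eta_t$, well-posedness of the $Y$-equation up to and including $T$ requires an argument that $|\eta_t|$ decays fast enough to compensate the singularity of $c(t)$; the paper sidesteps this by localizing with the stopping time $\tau$ and setting $Y_t:=X_t^{\ell^\varepsilon}(\mu_0)$ for $t\ge\tau$. If you switch to the paper's deterministic-magnitude coupling drift (with denominator $\int_0^T$ rather than $\int_t^T$), both issues disappear and the rest of your outline (DDS time change, Girsanov under the resulting $R\P(\cdot\mid\F_0)$, Young and H\"older steps) goes through as intended.
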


\begin{proof} Fix $T>0$. Take $\F_0$-measurable random variables
$X_0, Y_0$ with $\L_{X_0}=\mu_0, \L_{Y_0}=\nu_0$.
Let $Y_t$ solve the SDE
\begin{equation}\begin{split}\label{EY}
\d Y_t&=b(t,Y_t,\L_{X_t(\nu_0)})\,\d t+\xi(t)\I_{[0,\tau)}(t) \frac{X^{\ell^\varepsilon}_t(\mu_0)-Y_t}
{|X^{\ell^\varepsilon}_t(\mu_0)-Y_t|}\,\d \ell^\varepsilon_t+\sigma(t)\,\d W_{\ell^\varepsilon_t-
\ell^\varepsilon_0}
\end{split}\end{equation}
with $\L_{Y_0}=\nu_0$, where
$$\tau:=T\wedge\inf\{t\geq 0;
X^{\ell^\varepsilon}_t(\mu_0)=Y_t\}$$
and
$$\xi(t):=
\big\{|X_0-Y_0|
+K(t,\theta)\mathbb{W}_\theta(\mu_0,\nu_0)\big\}
\frac{\sqrt{K_1(t)}}
{\int_0^{T}K_1(s)
\,\d \ell^\varepsilon_s}.
$$
It is clear that $(X^{\ell^\varepsilon}_t,Y_t)$ is well defined for $t<\tau$. By \textbf{(H3)}, it follows
that for $t<\tau$
$$
\d |X^{\ell^\varepsilon}_t(\mu_0)-Y_t|\leq \frac12\,\kappa_1(t)|X^{\ell^\varepsilon}_t(\mu_0)-Y_t|\,\d t+\frac12\,\kappa_2(t)\mathbb{W}_\theta(\mu_t,\nu_t)\,\d t-\xi(t)\,\d \ell^\varepsilon_t.
$$
Thus, by \eqref{wtp}, we obtain that
\begin{align*}
&\sqrt{K_1(t)}\,
|X^{\ell^\varepsilon}_t(\mu_0)-Y_t|\\
&\leq|X_0-Y_0|+\frac12\int_0^t
\sqrt{K_1(s)}\,
\kappa_2(s)\mathbb{W}_\theta(\mu_s,\nu_s)
\,\d s-\int_0^t
\sqrt{K_1(s)}\,
\xi(s)\,\d \ell^\varepsilon_s\\
&\leq|X_0-Y_0|+K(t,\theta)\mathbb{W}_\theta(\mu_0,\nu_0)
-\int_0^t
\sqrt{K_1(s)}\,\xi(s)\,\d \ell^\varepsilon_s\\
&=\big\{|X_0-Y_0|
+K(t,\theta)\mathbb{W}_\theta(\mu_0,\nu_0)\big\}
\left\{1-
\frac{\int_0^t
K_1(s)
\,\d \ell^\varepsilon_s
}{\int_0^{T}K_1(s)
\,\d \ell^\varepsilon_s}
\right\}
\end{align*}
for all $t<\tau$. If $\tau(\omega)>T$ for some $\omega\in\Omega$,
we can take $t=T$ in the above inequality
to get
$$
0<\sqrt{K_1(T)}\,|X^{\ell^\varepsilon}_T
(\mu_0,\omega)-Y_T(\omega)|\leq0,
$$
which is absurd. Therefore, $\tau\leq T$. Letting $Y_t:=X^{\ell^\varepsilon}_t(\mu_0)$ for $t\in[\tau,T]$, then $Y_t$ solves \eqref{EY} for $t\in[\tau,T]$. In particular, $X^{\ell^\varepsilon}_T(\mu_0)=Y_T$.

Denote by $\gamma^\varepsilon:[\ell^\varepsilon_0,\infty)
\rightarrow[0,\infty)$ the inverse function of $\ell^\varepsilon$. Then $\ell^\varepsilon_
{\gamma^\varepsilon_t}=t$ for $t\geq\ell^\varepsilon_0$,
$\gamma^\varepsilon_
{\ell^\varepsilon_t}=t$ for $t\geq0$, and $t\mapsto\gamma^\varepsilon_t$
is absolutely continuous and
strictly increasing. Let
$$
\widetilde{W}_t:=\int_{0}^{t}\Psi(r)\,\d r+
W_t\quad\text{and}\quad M_t:=-\int_0^t
\< \Psi(r), \d
W_r\>,\quad t\geq0,
$$
where $$\Psi(r):=\sigma^{-1}
_{\gamma^\varepsilon_{r+\ell^\varepsilon_0}}
\Phi\big(\gamma^\varepsilon_{r+\ell^\varepsilon_0}\big)
\quad\text{and}\quad
\Phi(r):=\xi(r)\I_{[0,\tau)}(r)
\frac{X^{\ell^\varepsilon}_r(\mu_0)
-Y_r}{|X^{\ell^\varepsilon}_r(\mu_0)-Y_r|}.
$$

By \textbf{(H5)} and the elementary inequality
that $(a+b)^2\leq2a^2+2b^2$ for $a,b\geq0$,
the compensator of the martingale $M_t$
satisfies, for $t\geq0$,
\begin{equation}\begin{split}\label{ddfaw}
    \<M\>_t&=\int_0^t|\Psi(r)|^2\,\d r
    \leq\int_0^T|\sigma_s^{-1}\Phi(s)|^2\,\d\ell^\varepsilon_s\\
    &\leq\int_0^T\lambda(s)^2\Phi(s)^2\,\d\ell^\varepsilon_s
    \leq\lambda(T)^2\int_0^T\xi(s)^2\,\d\ell^\varepsilon_s\\
    &\leq\lambda(T)^2
    \big\{|X_0-Y_0|
+K(T,\theta)\mathbb{W}_\theta(\mu_0,\nu_0)\big\}^2
\left(
    \int_0^{T}K_1(s)
\,\d \ell^\varepsilon_s
    \right)^{-1}\\
&\leq2\lambda(T)^2
    \left\{|X_0-Y_0|^2
    +K(T,\theta)^2\mathbb{W}_\theta(\mu_0,\nu_0)^2\right\}
    \left(
    \int_0^{T}K_1(s)
\,\d \ell^\varepsilon_s
    \right)^{-1}.
\end{split}\end{equation}

By Novikov's criterion, we have $\E[R|\F_0]=1$, where
$$
R:=\exp\left[M_{\ell^\varepsilon_T-\ell^\varepsilon_0}
-\frac12\<M\>_{\ell^\varepsilon_T-\ell^\varepsilon_0}
\right].
$$
According to Girsanov's theorem,
$(\widetilde{W}_t)_{0\leq t\leq\ell^\varepsilon(T)-\ell^\varepsilon(0)}$ is a
$d$-dimensional Brownian motion under the new probability
measure $R\P(\cdot|\F_0)$. Rewrite \eqref{EY} as
$$
    \d Y_t=b(t,Y_t,\L_{X_t(\nu_0)})\,\d t+\sigma(t)\,\d \widetilde{W}_{\ell^\varepsilon_t-
\ell^\varepsilon_0}.
$$
Thus, the distribution of $(Y_t)_{0\leq t\leq T}$
under $R\P(\cdot|\F_0)$
coincides with that of
$(X^{\ell^\varepsilon}_t(\nu_0))_{0\leq t\leq T}$
under $\P(\cdot|\F_0)$; in particular, it holds that for
any $f\in \B_b(\R^d)$,
\begin{equation}\label{jh43cdf}
    \E_{\P(\cdot|\F_0)}f(X^{\ell^\varepsilon}_T(\nu_0))=
    \E_{R\P(\cdot|\F_0)}f(Y_T)=
    \E_{\P(\cdot|\F_0)}\left[Rf(Y_T)\right]
    =\E_{\P(\cdot|\F_0)}\big[Rf(X^{\ell^\varepsilon}_T
    (\mu_0))\big].
\end{equation}
By \eqref{jh43cdf}, the Young
inequality (cf. \cite[p.\ 24]{Wbook}), and the observation that
\begin{align*}
    \log R&=-\int_0^{\ell^\varepsilon_T-
    \ell^\varepsilon_0}\<\Psi(r),\d W_r\>
    -\frac12\int_0^{\ell^\varepsilon_T-
    \ell^\varepsilon_0}|\Psi(r)|^2\,\d r\\
    &=-\int_0^{\ell^\varepsilon_T-
    \ell^\varepsilon_0}\<\Psi(r),\d \widetilde{W}_r\>
    +\frac12\,\<M\>_{\ell^\varepsilon_T-
    \ell^\varepsilon_0},
\end{align*}
we get that,
for any positive $f\in \B_b(\R^d)$,
\begin{align*}
\E_{\P(\cdot|\F_0)}\log f(X^{\ell^\varepsilon}_T(\nu_0))
&=\E_{\P(\cdot|\F_0)}\big[R\log f(X^{\ell^\varepsilon}_T(\mu_0))\big]\\
&\leq\log\E_{\P(\cdot|\F_0)} f(X^{\ell^\varepsilon}_T(\mu_0))
+\E_{\P(\cdot|\F_0)}[R\log R]\\
&=\log \E_{\P(\cdot|\F_0)} f(X^{\ell^\varepsilon}_T(\mu_0))+
\E_{R\P(\cdot|\F_0)}\log R\\
&=\log \E_{\P(\cdot|\F_0)} f(X^{\ell^\varepsilon}_T(\mu_0))+
\frac12\,\E_{R\P(\cdot|\F_0)}\<M\>_{\ell^\varepsilon_T
-\ell^\varepsilon_0}.
\end{align*}
Combining this with the Jensen inequality
and \eqref{ddfaw}, we obtain
\begin{align*}
    &P_T^{\ell^\varepsilon}\log f(\nu_0)\\
    &=\E\big\{
    \E_{\P(\cdot|\F_0)}\log f(X^{\ell^\varepsilon}_T(\nu_0))
    \big\}\\
    &\leq\log\E\big\{
    \E_{\P(\cdot|\F_0)} f(X^{\ell^\varepsilon}_T(\mu_0))
    \big\}+\frac12\,
    \E\big\{
    \E_{\P(\cdot|\F_0)}[R\<M\>_{\ell^\varepsilon_T-
    \ell^\varepsilon_0}]
    \big\}\\
    &\leq\log P_T^{\ell^\varepsilon} f(\mu_0)+\lambda(T)^2\left\{\E|X_0-Y_0|^2
    +K(T,\theta)^2\mathbb{W}_\theta(\mu_0,\nu_0)^2\right\}
    \left(
    \int_0^{T}K_1(s)
\,\d \ell^\varepsilon_s
    \right)^{-1}.
\end{align*}
Taking infimum over $\L_{X_0}=\mu_0, \L_{Y_0}=\nu_0$, we derive the log-Harnack inequality.

Next, we prove the power-Harnack inequality.
For any $p>1$ and positive $f\in \B_b(\R^d)$, we find
with \eqref{jh43cdf} and the H\"{o}lder inequality that
\begin{equation}\begin{split}\label{fds54fff}
    \E_{\P(\cdot|\F_0)}
    f(X_T^{\ell^\varepsilon}(\nu_0))
    &=\E_{\P(\cdot|\F_0)}\big[R
    f(X_T^{\ell^\varepsilon})(\mu_0)\big]\\
    &\leq \big(\E_{\P(\cdot|\F_0)}f^p
    (X_T^{\ell^\varepsilon})(\mu_0)\big)^{1/p}
    \cdot\big(
    \E_{\P(\cdot|\F_0)}\big[R^{p/(p-1)}\big]
    \big)^{(p-1)/p}.
\end{split}\end{equation}
Since by \eqref{ddfaw}
\begin{align*}
    R^{p/(p-1)}&=\exp\left[
    \frac{p}{p-1}\,M_{\ell^\varepsilon_T-\ell^\varepsilon_0}
    -\frac{p}{2(p-1)}\,
    \<M\>_{\ell^\varepsilon_T-\ell^\varepsilon_0}
    \right]\\
    &=\exp\left[
    \frac{p}{2(p-1)^2}\,\<M\>_{\ell^\varepsilon_T
    -\ell^\varepsilon_0}
    \right]\times
    \exp\left[
    \frac{p}{p-1}\,M_{\ell^\varepsilon_T-\ell^\varepsilon_0}
    -\frac{p^2}{2(p-1)^2}\,
    \<M\>_{\ell^\varepsilon_T-\ell^\varepsilon_0}
    \right]\\
    &\leq\exp\left[\frac{p\lambda(T)^2}{(p-1)^2}
    \left\{|X_0-Y_0|^2
    +K(T,\theta)^2\mathbb{W}_\theta(\mu_0,\nu_0)^2\right\}
    \left(
    \int_0^{T}K_1(s)
\,\d \ell^\varepsilon_s
    \right)^{-1}
    \right]\\
    &\quad\times\exp\left[
    \frac{p}{p-1}\,M_{\ell^\varepsilon_T-\ell^\varepsilon_0}
    -\frac{p^2}{2(p-1)^2}\,
    \<M\>_{\ell^\varepsilon_T-\ell^\varepsilon_0}
    \right],
\end{align*}
we know that
$$
    \E_{\P(\cdot|\F_0)}\left[R^{p/(p-1)}\right]
    \leq\exp\left[\frac{p\lambda(T)^2}{(p-1)^2}
    \left\{|X_0-Y_0|^2
    +K(T,\theta)^2\mathbb{W}_\theta(\mu_0,\nu_0)^2\right\}
    \left(
    \int_0^{T}K_1(s)
\,\d \ell^\varepsilon_s
    \right)^{-1}
    \right].
$$
Inserting this estimate into \eqref{fds54fff}, we
obtain
\begin{align*}
    P_T^{\ell^\varepsilon}f(\nu_0)
    &=\E\big\{\E_{\P(\cdot|\F_0)}
    f(X_T^{\ell^\varepsilon}(\nu_0))\big\}\\
    &\leq\E\left\{
    \big(\E_{\P(\cdot|\F_0)}f^p
    (X_T^{\ell^\varepsilon})(\mu_0)\big)^{1/p}
    \exp\left[\frac{\lambda(T)^2}{p-1}\,
    |X_0-Y_0|^2
    \left(
    \int_0^{T}K_1(s)
\,\d \ell^\varepsilon_s
    \right)^{-1}
    \right]\right\}\\
    &\quad\times\exp\left[\frac{\lambda(T)^2}{p-1}\,
    K(T,\theta)^2\mathbb{W}_\theta(\mu_0,\nu_0)^2
    \left(
    \int_0^{T}K_1(s)
\,\d \ell^\varepsilon_s
    \right)^{-1}
    \right].
\end{align*}
It remains to use the H\"{o}lder inequality
to get the desired power-Harnack inequality.
\end{proof}

The following two assumptions will be used:
\begin{enumerate}
        \item[\textbf{(A1)}]
            $\sigma$ is piecewise constant, i.e.\ there
            exists a sequence $\{t_n\}_{n\geq0}$
            with $t_0=0$ and $t_n\uparrow\infty$
            such that
            $$
                \sigma(t)=\sum_{n=1}^\infty
                \I_{[t_{n-1},t_n)}(t)
                \sigma(t_{n-1});
            $$

        \item[\textbf{(A2)}]
            For every $t>0$,
            there exists $C_t>0$ depending
            only on $t$ such that
            $$
                |b(s,x,\mu)-b(s,y,\mu)|
                \leq C_t|x-y|,
                \quad 0\leq s\leq t,\,
                x,y\in\R^d,\,\mu\in\scr P_\theta.
            $$
    \end{enumerate}

\begin{lem}\label{appro11}
    Assume \textbf{\upshape(H1$'$)} and \textbf{\upshape(H2)}-\textbf{\upshape(H5)}. If \textbf{\upshape(A1)} and \textbf{\upshape(A2)} hold,
    then for all $t\geq0$
    and $ \mu_0\in \scr P_\theta $,
    $$
        \lim_{\varepsilon\downarrow0}
        X_t^{\ell^\varepsilon}(\mu_0)=X_t^\ell(\mu_0)
        \quad \P\text{-a.s.}
    $$
\end{lem}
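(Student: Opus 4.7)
The plan is to exploit \textbf{(A1)} and \textbf{(A2)} to reduce convergence of the SDE solutions to pathwise convergence of a finite sum of Brownian increments, and then to close the argument with Gronwall's lemma and dominated convergence. Fix $t>0$ and $\mu_0\in\scr P_\theta$. Since $\mu_0$ is common to both equations, the drift $(s,x)\mapsto b(s,x,\L_{X_s(\mu_0)})$ is the same in the SDEs for $X^\ell$ and $X^{\ell^\vv}$, and subtracting their integral forms yields
\begin{align*}
X_t^{\ell^\vv}-X_t^\ell=\int_0^t\big[b(s,X_s^{\ell^\vv},\L_{X_s(\mu_0)})-b(s,X_s^\ell,\L_{X_s(\mu_0)})\big]\,\d s+I_t^\vv,
\end{align*}
where $I_t^\vv:=\int_0^t\si(s)\,\d W_{\ell^\vv_s-\ell^\vv_0}-\int_0^t\si(s)\,\d W_{\ell_s}$. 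Using the Lipschitz estimate \textbf{(A2)} on the drift integral and invoking the classical Gronwall inequality, one obtains
\begin{align*}
|X_t^{\ell^\vv}-X_t^\ell|\le|I_t^\vv|+C_t\,\e^{C_t t}\int_0^t|I_s^\vv|\,\d s,
\end{align*}
so the problem is reduced to showing $\int_0^t|I_s^\vv|\,\d s\to 0$ $\P$-a.s.\ as $\vv\downarrow 0$.

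Next I would apply \textbf{(A1)}: since $\si$ takes only finitely many values on $[0,t]$, for every $s\in[0,t]$ both stochastic integrals collapse to the telescoping sum
\begin{align*}
I_s^\vv=\sum_{n:\,t_{n-1}<s}\si(t_{n-1})\big\{\big[W_{\ell^\vv_{t_n\wedge s}-\ell^\vv_0}-W_{\ell_{t_n\wedge s}}\big]-\big[W_{\ell^\vv_{t_{n-1}}-\ell^\vv_0}-W_{\ell_{t_{n-1}}}\big]\big\}.
\end{align*}
The pointwise convergence $\ell^\vv_s\downarrow\ell_s$ from \eqref{approximation}, together with $\ell^\vv_0=\vv^{-1}\int_0^\vv\ell_r\,\d r\to 0$ (which follows from right-continuity of $\ell$ at $0$), gives $\ell^\vv_s-\ell^\vv_0\to\ell_s$ for every $s$. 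Since $W$ has continuous sample paths $\P$-almost surely, each summand then tends to $0$ a.s., whence $I_s^\vv\to 0$ $\P$-a.s.\ for every $s\in[0,t]$.

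To upgrade this to $L^1([0,t])$-convergence, as required by the Gronwall bound, I would use dominated convergence. The bound $\ell^\vv_s-\ell^\vv_0\le\ell_{s+1}+s\le\ell_{t+1}+t$, valid for $\vv\in(0,1)$ and $s\in[0,t]$, together with $\ell_s\le\ell_{t+1}+t$, shows that every Brownian value appearing in $I_s^\vv$ lies in the compact interval $[0,\ell_{t+1}+t]$; hence $|I_s^\vv|$ is dominated by a finite multiple of $\sup_{0\le u\le\ell_{t+1}+t}|W_u|$, which is $\P$-a.s.\ finite and independent of $\vv$ and $s$. Dominated convergence then gives $\int_0^t|I_s^\vv|\,\d s\to 0$ $\P$-a.s., and the Gronwall estimate delivers $X_t^{\ell^\vv}\to X_t^\ell$ $\P$-a.s., proving the claim.

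The main obstacle is that $\ell^\vv$ does not converge to $\ell$ uniformly on $[0,t]$, since the mollification smears each jump of $\ell$ over a window of width $\vv$; this rules out any naive control of $\sup_{s\le t}|W_{\ell^\vv_s-\ell^\vv_0}-W_{\ell_s}|$. Assumption \textbf{(A1)} is precisely what sidesteps this difficulty: it reduces both stochastic integrals to Brownian evaluations at the finitely many fixed partition times $\{t_n\}$, at which mere pointwise convergence of $\ell^\vv$ is enough to exploit the continuity of $W$.
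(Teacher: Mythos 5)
Your proof is correct and takes essentially the same route as the paper's: both exploit \textbf{(A1)} to reduce the difference of the two stochastic integrals to a finite (Abel/telescoping) combination of Brownian increments indexed by the fixed partition times $\{t_n\}$, apply \textbf{(A2)} and Gronwall to control the drift term, invoke pointwise convergence $\ell_s^{\vv}\downarrow\ell_s$ together with continuity of $W$ to get a.s. pointwise vanishing, and close with dominated convergence. Your write-up is a bit more explicit about the telescoping identity, the convergence $\ell_0^\vv\to 0$, and the dominating random variable, but the structure and the ideas match the paper's proof.
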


\begin{proof}
    It is not hard to obtain from \textbf{\upshape(A1)} and \textbf{\upshape(A2)} that, for all $t\geq0$ and $\varepsilon\in(0,1)$,
    $$
        |X_t^{\ell^\varepsilon}(\mu_0)-X_t^\ell(\mu_0)|
        \leq C_t\int_0^t|X_s^{\ell^\varepsilon}(\mu_0)
        -X_s^\ell(\mu_0)|
        \,\d s+g(\varepsilon,t),
    $$
    where
    $$
        g(\varepsilon,t):=
        \sup_{s\in[0,t]}\|\sigma(s)\|\cdot
        \left(
        \big|W_{\ell^{\varepsilon}_t-
        \ell^{\varepsilon}_0}
        -W_{\ell_t}\big|
        +2\sum_{n:\,t_n<t}
        \big|W_{\ell^{\varepsilon}_{t_n}-
        \ell^{\varepsilon}_0}
        -W_{\ell_{t_n}}\big|
        \right).
    $$
    Using Gronwall's inequality, we get
    $$
        |X_t^{\ell^\varepsilon}(\mu_0)-X_t^\ell(\mu_0)|
        \leq g(\varepsilon,t)
        +C_t\int_0^t g(\varepsilon,s)
        \e^{C_t(t-s)}\,\d s.
    $$
    Due to \eqref{approximation}, for all $s\geq0$, $\lim_{\varepsilon\downarrow 0}g(\varepsilon,s)=0$ a.s. It remains to use the dominated convergence theorem to finish the proof.
\end{proof}

\begin{proof}[Proof of Proposition \ref{timechange}]
    Fix $T>0$. By a standard
    approximation argument, we may and do
    assume that $f\in C_b(\R^d)$.

\emph{Step 1:}
    Assume \textbf{(A1)} and \textbf{(A2)}. Since
    $\ell_t$ is of bounded variation,
    it is not hard to verify from \eqref{approximation} that
    $$
        \lim_{\varepsilon\downarrow 0}
        \int_0^{T}K_1(s)\,\d \ell^\varepsilon_s
        =\int_0^{T}K_1(s)\,\d \ell_s.
    $$
    Letting
    $\varepsilon\downarrow0$ in Lemma \ref{L3.3},
    and using Lemma \ref{appro11},
    we get the desired inequalities.

\emph{Step 2:}
    Assume \textbf{(A2)}. Clearly, we can pick a sequence
    of $\R^d\otimes\R^d$-valued
    functions $\{\sigma_n\,:\,n\in\N\}$ on $[0,\infty)$
    such that each $\sigma_n$ is piecewise constant,
    $\|(\sigma_n(t))^{-1}\|\leq\lambda(t)$ for all $n\in\N$ and
    $t\in[0,T]$, and $\sigma_n\rightarrow\sigma$ in
    $L^2([0,T];\,\d\ell)$ as $n\rightarrow\infty$.
    Let $X_t^{\ell,n}$ solve \eqref{jddc23}
    with $\sigma$ replaced by $\sigma_n$ and
    $X_0^{\ell,n}=X_0^\ell$,
    and denote by
    $P_t^{\ell,n}$ the associated Markov operator.
    By Step 1, the statement of Proposition \ref{timechange}
    holds with $P_{T}^{\ell}$
    replaced by $P_{T}^{\ell,n}$.
    It suffice to prove that
    \begin{equation}\label{jh4dgc}
        \lim_{n\rightarrow\infty}P_T^{\ell,n}
        f=P_T^{\ell}f,\quad f\in C_b(\R^d).
    \end{equation}
    It follows
    from \textbf{(A2)} that
    $$
        |X_t^{\ell,n}-X_t^\ell|
        \leq C_t\int_0^t|X_s^{\ell,n}-X_s^\ell|
        \,\d s+\left|
        \int_0^t\left\{
        \sigma_n(s)-\sigma(s)
        \right\}\,\d W_{\ell_s}
        \right|.
    $$
    Noting that
    $$
        \lim_{n\rightarrow\infty}\E
        \left|
        \int_0^t\left\{
        \sigma_n(s)-\sigma(s)
        \right\}\,\d W_{\ell_s}
        \right|^2
        =\lim_{n\rightarrow\infty}
        \int_0^t
        \left\|
        \sigma_n(s)-\sigma(s)
        \right\|_{HS}^2\,\d\ell_s=0,
    $$
    we have (up to a subsequence) a.s. $\lim_{n\rightarrow\infty}\int_0^t\left\{
        \sigma_n(s)-\sigma(s)
        \right\}\,\d W_{\ell_s}=0$. Then
        as in the proof of Lemma \ref{appro11}, we
        find that for all $t\geq0$,
    $$
        \lim_{n\rightarrow\infty}
        X_t^{\ell,n}=X_t^\ell
        \quad \text{a.s.},
    $$
    which implies \eqref{jh4dgc}.

\emph{Step 3:}
    For the general case, we shall make use of
    the approximation argument
    in \cite[part (c) of proof of Theorem 2.1]{WW}
    (see also \cite{DH}). Let
    $$
        \tilde{b}(t,x,\mu_t):=b(t,x,\mu_t)
        -\frac12\,\kappa_1(t)x,\quad t\geq0,\,x\in\R^{d}.
    $$
    By {\bf(H3)}, it is easy to see
    that the mapping
    $\operatorname{id}-\varepsilon\tilde{b}(t,\cdot,\mu_t)
    :\R^d\rightarrow\R^d$ is injective
    for any $\varepsilon>0$ and $t\geq0$.
    For $\varepsilon>0$ and $t>0$, let
    $$
        \tilde{b}^{(\varepsilon)}(t,x,\mu_t)
        :=\frac1\varepsilon\left[
        \left(\operatorname{id}
        -\varepsilon\tilde{b}(t,\cdot,\mu_t)\right)^{-1}(x)-x
        \right],\quad x\in\R^d.
    $$
    Then for any $\varepsilon>0$ and $t>0$, $\tilde{b}^{(\varepsilon)}(t,\cdot,\mu_t)$
    is dissipative and satisfies \textbf{(A2)} with
    $b$ replaced by $\tilde{b}^{(\varepsilon)}$,
    $|\tilde{b}^{(\varepsilon)}(t,\cdot,\mu_t)|\leq
    |\tilde{b}(t,\cdot,\mu_t)|$ and
    $\lim_{\varepsilon\downarrow0}
    \tilde{b}^{(\varepsilon)}(t,\cdot,\mu_t)=
    \tilde{b}(t,\cdot,\mu_t)$. Let
    $b^{(\varepsilon)}(t,x,\mu_t):=
    \tilde{b}^{(\varepsilon)}(t,x,\mu_t)
    +\frac12\,\kappa_1(t)x$. Then $b^{(\varepsilon)}(t,\cdot,\mu_t)$ also satisfies
    \textbf{(A2)} with
    $b$ replaced by $b^{(\varepsilon)}$ and
    \begin{equation}\label{jg4dv7}
        2\<b^{(\varepsilon)}(t,x,\mu_t)-
        b^{(\varepsilon)}(t,y,\mu_t),
        x-y\>\leq \kappa_1(t)|x-y|^2.
    \end{equation}
    Let $X_t^{\ell,(\varepsilon)}(\mu_0)$
    solve the SDE \eqref{jddc23} with
    $b$ replaced by $b^{(\varepsilon)}$ and $X_0^{\ell,(\varepsilon)}=X_0^\ell$. Denote by
    $P_t^{\ell,(\varepsilon)}$ the associated Markov
    operator. Due to the
    second part of the proof, Proposition \ref{timechange}
    holds with $P_T^\ell$ replaced
    by $P_T^{\ell,(\varepsilon)}$. Then we only need
    to show that
    \begin{equation}\label{k54fc23f}
        \lim_{\varepsilon\downarrow0}P_T^{\ell,(\varepsilon)}
        f=P_T^{\ell}f,\quad
        f\in C_b(\R^d).
    \end{equation}
    To this end, we obtain from \eqref{jg4dv7}
    and \eqref{elementary1} with $\rho=2$ that
    \begin{align*}
        \d|X_t^{\ell,(\varepsilon)}
        -X_t^{\ell}|^2
        &=2\<X_t^{\ell,(\varepsilon)}
        -X_t^{\ell},
        b^{(\varepsilon)}(t,
        X_t^{\ell,(\varepsilon)},\mu_t)
        -b^{(\varepsilon)}
        (t,X_t^{\ell},\mu_t)\>\,\d t\\
        &\quad+2\<X_t^{\ell,(\varepsilon)}
        -X_t^{\ell},
        b^{(\varepsilon)}(t,X_t^{\ell},\mu_t)
        -b(t,X_t^{\ell},\mu_t)\>\,\d t\\
        &\leq\kappa_1(t)
        |X_t^{\ell,(\varepsilon)}
        -X_t^{\ell}|^2\,\d t
        +2|X_t^{\ell,(\varepsilon)}
        -X_t^{\ell}|\cdot
        \big|
        b^{(\varepsilon)}(t,X_t^{\ell},\mu_t)
        -b(t,X_t^{\ell},\mu_t)
        \big|\,\d t\\
        &\leq\big(\kappa_1(t)+1\big)^+
        |X_t^{\ell,(\varepsilon)}
        -X_t^{\ell}|^2\,\d t
        +\big|
        b^{(\varepsilon)}(t,X_t^{\ell},\mu_t)
        -b(t,X_t^{\ell},\mu_t)
        \big|^2\,\d t.
    \end{align*}
    This yields that
    $$
        |X_t^{\ell,(\varepsilon)}-X_t^{\ell}|^2
        \leq\int_0^t
        \big(\kappa_1(s)+1\big)^+
        |X_s^{\ell,(\varepsilon)}
        -X_s^{\ell}|^2\,\d s
        +\int_0^t
        \big|
        b^{(\varepsilon)}(s,X_s^{\ell},\mu_s)
        -b(s,X_s^{\ell},\mu_s)
        \big|^2\,\d s.
    $$
    Combining this with Gronwall's inequality,
    we obtain
    $$
        |X_t^{\ell,(\varepsilon)}
        -X_t^{\ell}|^2
        \leq\exp\left[
        \int_0^t
        \big(\kappa_1(s)+1\big)^+\,\d s
        \right]
        \cdot\int_0^t
        \big|
        b^{(\varepsilon)}(s,X_s^{\ell},\mu_s)
        -b(s,X_s^{\ell},\mu_s)
        \big|^2\,\d s.
    $$
    By {\bf (H2)} and {\bf(H4)}, letting $\varepsilon\downarrow0$ and using the
    dominated convergence theorem,
    we get $\lim_{\varepsilon\downarrow0}
    X_t^{\ell,(\varepsilon)}=X_t^{\ell}$
    for all $t\geq0$. In particular,
    \eqref{k54fc23f} holds. The proof
    is now finished.
\end{proof}

\subsection{Proof of Theorem \ref{T3.2}}

\begin{proof}[Proof of Theorem \ref{T3.2}]
   Since the processes $W$ and $S$ are independent,
    it holds that
   $$
        P_{T}f(\cdot)=\E\left[P_{T}^{\ell}f(\cdot)
        \left|_{\ell=S}\right.
        \right],\quad T>0,\,f\in\B_b(\R^d).
    $$
  Combining the estimates in Proposition \ref{timechange}
  with the Jensen inequality and the
  H\"{o}lder inequality, we obtain the
  desired Harnack type inequalities.
\end{proof}

\section{Appendix}\label{app}

The following result should be known, but we could not
find a reference and so we include a proof for the sake
of completeness.

\begin{prp}\label{LT} Assume that
$b:[0,\infty)\times\mathbb{R}^d\rightarrow
\mathbb{R}^d$ is measurable and continuous in the
space variable $x\in\mathbb{R}^d$ and
$\sigma:[0,\infty)\rightarrow
\mathbb{R}^d\otimes\R^d$ is measurable and
locally bounded. Let $(Z_t)_{t\geq0}$ be a
L\'{e}vy process with L\'{e}vy measure $\nu_Z$
satisfying $\int_{|x|\geq1}|x|^{\theta}\nu_Z(\d x)<\infty$ for some $\theta\geq1$. If there exists a locally bounded function $\kappa:[0,\infty)\to\mathbb{R}$
such that
\begin{align}\label{tb} 2\<b(t,x)-b(t,y)
,x-y\>\leq \kappa(t)|x-y|^2,\quad
x,y\in\mathbb{R}^d, t\geq0,
\end{align}
and $b(t,0)$ is locally bounded in the
time variable $t\geq0$, then the SDE
$$\d X_t=b(t,X_t)\,\d t+\sigma(t)\,\d Z_t
$$
starting from $\F_0$-measurable initial value $X_0$ with $\L_{X_0}\in\scr P_{\theta}$ has a unique strong solution satisfying
$$\E\sup_{s\in[0,t]}|X_t|^{\theta}<\infty
\quad\text{for all $t>0$.}
$$
\end{prp}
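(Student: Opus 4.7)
The plan is to establish existence, uniqueness, and the moment bound in three steps: uniqueness via Gronwall applied to the one-sided Lipschitz estimate, an a priori $L^\theta$-moment bound obtained from It\^o's formula applied to $(|x|^2+\varepsilon)^{\theta/2}$, and existence by approximating $b$ by globally Lipschitz coefficients and passing to the limit using the a priori bound.

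\textbf{Uniqueness.} If $X, Y$ are two solutions with $X_0=Y_0$, then $t\mapsto X_t-Y_t$ has bounded variation (the jump part cancels since both solutions are driven by the same $Z$), so
$$
\ff{\d}{\d t}|X_t-Y_t|^2
=2\<X_t-Y_t, b(t,X_t)-b(t,Y_t)\>
\le \kappa(t)|X_t-Y_t|^2,
$$
and Gronwall's inequality gives $X_t=Y_t$ for all $t\ge0$.

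\textbf{A priori moment bound.} For any putative solution $X$ and any $\varepsilon>0$, apply It\^o's formula to $(|X_t|^2+\varepsilon)^{\theta/2}$. The drift contribution is controlled via
$$
2\<x, b(t,x)\>\le \kappa(t)|x|^2+2|x|\,|b(t,0)|,
$$
whose combination with \eqref{elementary1} bounds it by $C(t)\,(|x|^2+\varepsilon)^{\theta/2}$ on compact time intervals. The Gaussian part of $Z$ yields a $C^2$-term of the same order. For the jump part, It\^o's formula produces the compensated integral of
$$
(|x+\si(s)z|^2+\varepsilon)^{\theta/2}-(|x|^2+\varepsilon)^{\theta/2}
-\ff{\theta}{2}(|x|^2+\varepsilon)^{(\theta-2)/2}\,
2\<x,\si(s)z\>\,\I_{(0,1)}(|z|).
$$
Using the elementary inequalities $|(a+h)^{\theta/2}-a^{\theta/2}|\le C(a^{(\theta-1)/2}|h|+|h|^{\theta/2})$ for $a\ge\varepsilon>0$, together with a second-order Taylor bound on $\{|z|<1\}$, and invoking \textbf{(H1)} (i.e.\ $\int_{|x|\ge1}|x|^\theta\,\nu_Z(\d x)<\infty$) for large jumps and $\int_{|x|<1}|x|^2\,\nu_Z(\d x)<\infty$ for small jumps, the jump contribution is dominated by $C(t)\big(1+(|X_s|^2+\varepsilon)^{\theta/2}\big)$ with constants \emph{independent of $\varepsilon$}. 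Taking expectations (first localizing by $\tau_R=\inf\{t:|X_t|\ge R\}$), applying Gronwall and then letting $R\uparrow\infty$ and $\varepsilon\downarrow0$ yields $\E|X_t|^\theta<\infty$ uniformly on compacts. Finally, the $\sup$-estimate $\E\sup_{s\in[0,t]}|X_s|^\theta<\infty$ follows from Doob's maximal inequality applied to the martingale parts together with the drift estimate.

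\textbf{Existence.} Introduce a Yosida-type regularization $b^{(n)}(t,x)=n\big[(\mathrm{id}-\tfrac1n\tilde b(t,\cdot))^{-1}x-x\big]+\tfrac12\kappa(t)x$, where $\tilde b(t,x):=b(t,x)-\tfrac12\kappa(t)x$ is dissipative by \eqref{tb}; then each $b^{(n)}$ is globally Lipschitz in $x$, satisfies the same one-sided Lipschitz bound \eqref{tb}, is bounded by $|b(t,\cdot)|$, and converges pointwise to $b$ as $n\to\infty$ (cf.\ the approach used in Step~3 of the proof of Proposition~\ref{timechange}). Classical theory yields a unique solution $X^{(n)}$ driven by the same $Z$; the uniform moment bound from Step~2 applies to $X^{(n)}$ with constants independent of $n$. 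Applying It\^o's formula to $(|X^{(n)}_t-X^{(m)}_t|^2+\varepsilon)^{\theta/2}$, the jump terms cancel and the drift satisfies
$$
2\<X^{(n)}_t-X^{(m)}_t, b^{(n)}(t,X^{(n)}_t)-b^{(m)}(t,X^{(m)}_t)\>
\le \kappa(t)|X^{(n)}_t-X^{(m)}_t|^2+R_{n,m}(t),
$$
where $R_{n,m}(t)\to 0$ by the pointwise convergence $b^{(n)}\to b$ and the dominated convergence theorem (justified by the uniform moment bound). Gronwall shows that $(X^{(n)})$ is Cauchy in $L^\theta(\Omega; D([0,T];\R^d))$; the limit $X$ solves the SDE and inherits the moment bound.

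\textbf{Main obstacle.} The delicate point is the jump-part estimate in Step~2 when $1\le\theta<2$: the function $(|x|^2+\varepsilon)^{\theta/2}$ is $C^2$ but its second derivative is only polynomially controlled, so one must carefully split the jump integral at $|z|=1$ and exploit the cancellation with the compensator on small jumps (using the standard $\int (1\wedge|z|^2)\nu_Z(\d z)<\infty$) while relying on \textbf{(H1)} for the large-jump part, all uniformly in $\varepsilon$. Everything else reduces to classical SDE arguments once this estimate is in hand.
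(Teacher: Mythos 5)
Your approach for the moment bound --- the paper's only nontrivial content, since it cites existence and pathwise uniqueness as classical --- is essentially the paper's: apply It\^o's formula to a smoothed power of $|x|$ (you use $(|x|^2+\varepsilon)^{\theta/2}$, the paper uses $(1+|x|^2)^{\theta/2}$), localize with a stopping time, split the jump integral at $|z|=1$, use the finiteness of $\int_{|x|<1}|x|^2\,\nu_Z(\d x)$ for the compensated small jumps and {\bf(H1)} for the large jumps, estimate the continuous martingale by BDG with a Young-inequality absorption, and close with Gronwall. Your Yosida-regularization sketch for existence and the Gronwall argument for uniqueness are fine but unnecessary here.

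There is one genuine imprecision in your sup-estimate plan. In the L\'evy--It\^o decomposition the large-jump term enters as the \emph{uncompensated} Poisson integral $\int_0^t\int_{|x|\ge1}\big[f(X_{s-}+\sigma(s)x)-f(X_{s-})\big]\,N(\d s,\d x)$; it is neither a drift nor a martingale. (Writing the whole jump contribution as a single compensated integral, as your displayed formula suggests, is not valid when $\nu_Z$ has infinite mass near $\infty$.) Moreover, under {\bf(H1)} alone one does not have the $L^2$ control of the large jumps needed to compensate and invoke Doob/BDG on $(1+|X|^2)^{\theta/2}$, so your sentence ``Doob's maximal inequality applied to the martingale parts together with the drift estimate'' leaves this term uncontrolled. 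The paper closes the gap with a direct first-moment bound tailored to Poisson integrals:
$$
\E\sup_{s\in[0,t\wedge\tau_n)}\Big|\int_0^s\!\!\int_{|x|\ge1} J_1\,N(\d r,\d x)\Big|
\;\le\;\E\!\int_0^{t\wedge\tau_n}\!\!\int_{|x|\ge1}|J_1|\,N(\d r,\d x)
\;=\;\E\!\int_0^{t\wedge\tau_n}\!\!\int_{|x|\ge1}|J_1|\,\nu_Z(\d x)\,\d r,
$$
using that the Poisson integral of a nonnegative integrand dominates the supremum of the signed integral, and then exploiting $\int_{|x|\ge1}|x|^\theta\,\nu_Z(\d x)<\infty$ together with the pointwise bound $|J_1|\lesssim (1+|X_{r-}|^2)^{\theta/2}+|x|^\theta$. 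You should add this step; with it, the rest of your argument goes through as in the paper.
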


\begin{proof}
Under our assumptions, it is well known that
the SDE has a unique (strong) solution. It remains
to prove that the moments are finite.
Denote by $(l,Q,\nu_Z)$ the
L\'{e}vy triplet of $(Z_t)_{t\geq0}$.
By the L\'{e}vy-It\^{o}
decomposition (see e.g. \cite[Theorem 2.4.16]{App09}),
$$Z_t=lt+\sqrt{Q}W_t+\int_0^t\int_{|x|\geq1} x\,N(\d s,\d x)+\int_0^t\int_{|x|< 1} x\,\tilde{N}(\d s,\d x),$$
where $W=(W_t)_{t\geq0}$ is a $d$-dimensional (standard)
Brownian motion, $N$ is a Poisson random measure with intensity $\nu_Z(\d x)\d s$ and independent of  $W$, and $\tilde{N}$ is the associated
compensated Poisson random measure.
By It\^{o}'s formula (cf. \cite[Theorem 4.4.7]{App09}),
\begin{align*}
    \d |X_t|^2
    &=2\<X_{t-},b(t,X_{t-})+\sigma(t)l\>\,\d t
    +2\<X_{t-},\sigma(t)\sqrt{Q}\,\d W_t\>
    +\|\sigma(t)\sqrt{Q}\|_{HS}^2\,\d t\\
    &\quad+\int_{|x|\geq1}\left(|X_{t-}+\sigma(t)x|^2
    -|X_{t-}|^2\right)\,N(\d t, \d x)\\
    &\quad+\int_{|x|< 1}\left(|X_{t-}+\sigma(t)x|^2-|X_{t-}|^2\right)\,\tilde{N}(\d t, \d x)\\
    &\quad+\int_{|x|< 1}\left(|X_{t-}+\sigma(t)x|^2-|X_{t-}|^2
    -2\<X_{t-},\sigma(t)x\>\right)\,\nu_Z(\d x)\d t.
\end{align*}
Set $p:=\theta/2\geq1/2$. Applying It\^{o}'s formula again, we obtain
\begin{align*}
    &\d (1+|X_t|^2)^p
    =2p(p-1)(1+|X_{t-}|^2)^{p-2}
    |\big(\sigma(t)\sqrt{Q}\big)^*X_{t-}|^2
    \,\d t\\
    &\,\,\,+p(1+|X_{t-}|^2)^{p-1}\left(2\<X_{t-},
    b(t,X_{t-})+\sigma(t)l\>+\|\sigma(t)\sqrt{Q}\|_{HS}^2
    +\int_{|x|<1}|\sigma(t)x|^2\,\nu_Z(\d x)\right)\,\d t\\
    &\,\,\,+
    2p(1+|X_{t-}|^2)^{p-1}\<X_{t-}\,,\sigma(t)\sqrt{Q}\,
    \d W_t\>\\
    &\,\,\,+\int_{|x|\geq 1}
    J_1(x,t,p)\,N(\d t, \d x)
    +\int_{|x|< 1}J_1(x,t,p)\,\tilde{N}(\d t, \d x)
    +\int_{|x|< 1}J_2(x,t,p)\,\nu_Z(\d x)\d t,
\end{align*}
where
$$
   J_1(x,t,p):=(1+|X_{t-}+\sigma(t)x|^2)^p-(1+|X_{t-}|^2)^p,
$$
and
$$
    J_2(x,t,p):=
    (1+|X_{t-}+\sigma(t)x|^2)^p-(1+|X_{t-}|^2)^p
    -p(1+|X_{t-}|^2)^{p-1}(|X_{t-}+\sigma(t)x|^2
    -|X_{t-}|^2).
$$
Since $\sigma(t)$ and $b(t,0)$ are locally bounded
in $t\geq0$, it follows
from \eqref{tb} that we may find out a nondecreasing function
$H_1:[0,\infty)\rightarrow(0,\infty)$ such that
\begin{align*}
    \max\bigg\{&(1+|X_{t-}|^2)^{-1}|
    \big(\sigma(t)\sqrt{Q}\big)^*X_{t-}|^2,\\
    &2\<X_t,
    b(t,X_{t-})+\sigma(t)l\>+\|\sigma(t)\sqrt{Q}\|_{HS}^2
    +\int_{|x|<1}|\sigma(t)x|^2\,\nu_Z(\d x)
    \bigg\}
    \leq H_1(t)(1+|X_{t-}|^2).
\end{align*}
Then, we get
\begin{align*}
    (1+|X_s|^2)^p&\leq(1+|X_0|^2)^p+p(2p-1)H_1(s)\int_0^s
    (1+|X_{r-}|^2)^p\,\d r\\
    &\quad+2p\int_0^s
    (1+|X_{r-}|^2)^{p-1}\<X_{r-}\,,\sigma(r)\sqrt{Q}\,
    \d W_r\>
    +\int_0^s\int_{|x|\geq 1}
    J_1(x,r,p)\,N(\d r, \d x)\\
    &\quad+\int_0^s\int_{|x|< 1}J_1(x,r,p)\,
    \tilde{N}(\d r, \d x)
    +\int_0^s\int_{|x|< 1}J_2(x,r,p)\,\nu_Z(\d x)\d r\\
    &=:(1+|X_0|^2)^p+\sum_{i=1}^5I_i(s,p).
\end{align*}
Let $\tau_n:=\inf\{t\geq0: |X_t|\geq n\}$ for
$n\in\N$. Then we have
\begin{equation}\label{esti}
    \E\sup_{s\in[0,t\wedge\tau_n)}
    (1+|X_s|^2)^p\leq\E(1+|X_0|^2)^p
    +\sum_{i=1}^5\E\sup_{s\in[0,t\wedge\tau_n)}
    |I_i(s,p)|.
\end{equation}
We shall estimate these terms separately.
First,
\begin{align*}
    \E\sup_{s\in[0,t\wedge\tau_n)}I_1(s,p)
    &\leq p(2p-1)H_1(t)\E\int_0^{t\wedge\tau_n}
    (1+|X_{r-}|^2)^p\,\d r\\
    &\leq p(2p-1)H_1(t)\int_0^t
    \E\sup_{s\in[0,r\wedge\tau_n)}
    (1+|X_{s}|^2)^p\,\d r.
\end{align*}
By the Burkholder-Davis-Gundy inequality, there exist
a constant $c_1>0$ and a nondecreasing function
$H_2:[0,\infty)\rightarrow(0,\infty)$ such that
\begin{align*}
    \E\sup_{s\in[0,t\wedge\tau_n)}|I_2(s,p)|
    &\leq2c_1p\,\E\left(
    \int_0^{t\wedge\tau_n}(1+|X_{r-}|^2)^{2p-2}
    |\big(\sigma(r)\sqrt{Q}\big)^*X_{r-}|^2
    \,\d r
    \right)^{1/2}\\
    &\leq 2c_1pH_2(t)\E\left(
    \int_0^{t\wedge\tau_n}(1+|X_{r-}|^2)^{2p-1}
    \,\d r
    \right)^{1/2}\\
    &\leq 2c_1p\sqrt{t}H_2(t)
    \E\sup_{s\in[0,t\wedge\tau_n)}
    (1+|X_s|^2)^{p-1/2}.
\end{align*}
Applying the following inequality
(recall $p\geq1/2$)
\begin{equation}\label{young}
    yz^{p-1/2}\leq
    \frac{[3(2p-1)]^{2p-1}}{(2p)^{2p}}\,
    y^{2p}+\frac13\,z^p,\quad y,z\geq0,
\end{equation}
it holds that
$$
    \E\sup_{s\in[0,t\wedge\tau_n)}|I_2(s,p)|
    \leq[3(2p-1)]^{2p-1}
    [c_1\sqrt{t}H_2(t)]^{2p}
    +\frac13\,\E\sup_{s\in[0,t\wedge\tau_n)}
    (1+|X_s|^2)^p.
$$
Note that
\begin{align*}
    \E\sup_{s\in[0,t\wedge\tau_n)}|I_3(s,p)|
    &=\E\sup_{s\in[0,t\wedge\tau_n)}
    \left|
    \sum_{r\in[0,s],\,|\triangle Z_r|\geq1}
    J_1(\triangle Z_r,r,p)
    \right|\\
    &\leq\E\left[
    \sum_{r\in[0,t\wedge\tau_n),\,|\triangle Z_r|\geq1}
    |J_1(\triangle Z_r,r,p)|
    \right]\\
    &=\E\int_0^{t\wedge\tau_n}\int_{|x|\geq1}
    |J_1(x,r,p)|\,N(\d r,\d x)\\
    &=\E\int_0^{t\wedge\tau_n}\int_{|x|\geq1}
    |J_1(x,r,p)|\,\nu_Z(\d x)\d r.
\end{align*}
Since there exist $c_2=c_2(p)>0$ and nondecreasing
function $H_3:[0,\infty)\rightarrow(0,\infty)$ such that
$$
    |J_1(x,r,p)|\leq(1+|X_{r-}+\sigma(r)x|^2)^p+(1+|X_{r-}|^2)^p
    \leq c_2(1+|X_{r-}|^2)^p
    +c_2H_3(r)|x|^{2p},
$$
we know that
\begin{align*}
    \E\sup_{s\in[0,t\wedge\tau_n)}|I_3(s,p)|
    &\leq c_2\int_{|x|\geq1}\nu_Z(\d x)
    \cdot\E\int_0^{t\wedge\tau_n}
    (1+|X_{r-}|^2)^p\,\d r\\
    &\quad+c_2\int_0^{t\wedge\tau_n}H_3(r)\,\d r
    \cdot\int_{|x|\geq1}|x|^{2p}\,\nu_Z(\d x)\\
    &\leq c_2\int_{|x|\geq1}\nu_Z(\d x)
    \cdot\int_0^t
    \E\sup_{s\in[0,r\wedge\tau_n)}
    (1+|X_s|^2)^p\,\d r\\
    &\quad+c_2tH_3(t)
    \int_{|x|\geq1}|x|^{2p}\,\nu_Z(\d x).
\end{align*}
By the Burkholder-Davis-Gundy inequality,
cf. Novikov \cite[Theorem 1.1\, (a)]{Nov}, there
exists $c_3>0$ such that
$$
    \E\sup_{s\in[0,t\wedge\tau_n)}|I_4(s,p)|
    \leq c_3\E\left(
    \int_0^{t\wedge\tau_n}\int_{|x|<1}
    |J_1(x,r,p)|^2
    \,\nu_Z(\d x)\d r
    \right)^{1/2}.
$$
It is easy to verify that there exists a nondecreasing
function $H_4:[0,\infty)\rightarrow[1,\infty)$
such that
\begin{equation}\label{jh4fu}
    H_4(r)^{-1}\leq\frac{1+|X_{r-}+\sigma(r)x|^2}
    {1+|X_{r-}|^2}\leq H_4(r),\quad |x|<1,r\geq0.
\end{equation}
Combining this with the following elementary inequality
$$
    |y^p-z^p|\leq p(y^{p-1}+z^{p-1})|y-z|,\quad y,z\geq0,
$$
one has
\begin{align*}
    |J_1(x,r,p)|
    &\leq p\left[
    (1+|X_{r-}+\sigma(r)x|^2)^{p-1}
    +(1+|X_{r-}|^2)^{p-1}
    \right]\cdot\left|
    |X_{r-}+\sigma(r)x|^2-|X_{r-}|^2
    \right|\\
    &\leq p\left(
    H_4(r)^{|p-1|}+1
    \right)(1+|X_{r-}|^2)^{p-1}
    \cdot\left|
    |X_{r-}+\sigma(r)x|^2-|X_{r-}|^2
    \right|.
\end{align*}
For $|x|<1$, since it holds for some
nondecreasing function
$H_5:[0,\infty)\rightarrow(0,\infty)$ that
\begin{equation}\label{dg223d}
\begin{aligned}
    \left|
    |X_{r-}+\sigma(r)x|^2-|X_{r-}|^2
    \right|
    &=\left|2\<X_{r-},\sigma(r)x\>+|\sigma(r)x|^2
    \right|\\
    &\leq2|X_{r-}||\sigma(r)x|+|\sigma(r)x|^2\\
    &\leq H_5(r)(1+|X_{r-}|^2)^{1/2}|x|,
\end{aligned}
\end{equation}
we obtain
$$
    |J_1(x,r,p)|
    \leq
    p\left(
    H_4(r)^{|p-1|}+1
    \right)H_5(r)(1+|X_{r-}|^2)^{p-1/2}|x|.
$$
This yields that
\begin{align*}
    &\E\sup_{s\in[0,t\wedge\tau_n)}|I_4(s,p)|\\
    &\quad\leq c_3p\left(
    H_4(t)^{|p-1|}+1
    \right)H_5(t)\sqrt{t}\left(
    \int_{|x|<1} |x|^2\,\nu_Z(\d x)
    \right)^{1/2}
    \E\sup_{s\in[0,r\wedge\tau_n)}
    (1+|X_s|^2)^{p-1/2}\\
    &\quad\leq [3(2p-1)]^{2p-1}
    \left[
    2^{-1}c_3
    \left(
    H_4(t)^{|p-1|}+1
    \right)H_5(t)\sqrt{t}
    \left(
    \int_{|x|<1} |x|^2\,\nu_Z(\d x)
    \right)^{1/2}
    \right]^{2p}\\
    &\quad\quad+\frac13\,\E\sup_{s\in[0,t\wedge\tau_n)}
    (1+|X_s|^2)^p,
\end{align*}
where in the last inequality we have
used \eqref{young}. By the inequality
$$
    \left|y^p-z^p-pz^{p-1}(y-z)
    \right|
    \leq\frac{p|p-1|}{2}\,\left(
    y^{p-2}+z^{p-2}
    \right)(y-z)^2,\quad y,z\geq0,
$$
\eqref{jh4fu} and \eqref{dg223d}, we get that for $|x|<1$,
\begin{align*}
    &|J_2(x,r,p)|\\
    &\leq\frac{p|p-1|}{2}\left[
    (1+|X_{r-}+\sigma(r)x|^2)^{p-2}
    +(1+|X_{r-}|^2)^{p-2}
    \right]\left(
    |X_{r-}+\sigma(r)x|^2-|X_{r-}|^2
    \right)^2\\
    &\leq\frac{p|p-1|}{2}\left(
    H_4(r)^{|p-2|}+1
    \right)H_5(r)^2(1+|X_{r-}|^2)^{p-1}|x|^2.
\end{align*}
This implies that
\begin{align*}
    &\E\sup_{s\in[0,t\wedge\tau_n)}|I_5(s,p)|
    \leq\E\int_0^{t\wedge\tau_n}\int_{|x|<1}
    |J_2(x,r,p)|\,\nu_Z(\d x)\d r\\
    &\qquad\leq\frac{p|p-1|}{2}\left(
    H_4(t)^{|p-2|}+1
    \right)H_5(t)^2
    \int_{|x|<1}|x|^2\,\nu_Z(\d x)
    \cdot\int_0^t\E\sup_{s\in[0,r\wedge\tau_n)}
    (1+|X_s|^2)^p\,\d r.
\end{align*}
Substituting the above estimates into \eqref{esti},
we conclude that there exist $C=C(p)>0$ and
nondecreasing function
$\Phi:[0,\infty)\rightarrow(0,\infty)$ such that
$$
    \E\sup_{s\in[0,t\wedge\tau_n)}
    (1+|X_s|^2)^p\leq3\E(1+|X_0|^2)^p
    +\Phi(t)^C+\Phi(t)^{C}
    \int_0^t\E\sup_{s\in[0,r\wedge\tau_n)}
    (1+|X_s|^2)^p\,\d r.
$$
By Gronwall's inequality and letting $n\rightarrow\infty$,
we obtain that for all $t>0$
$$
    \E\sup_{s\in[0,t)}
    (1+|X_s|^2)^p
    \leq\left[
    3\E(1+|X_0|^2)^p
    +\Phi(t)^C
    \right]\cdot\exp\left[t\Phi(t)^{C}\right]
    <\infty,
$$
which completes the proof.
\end{proof}

\beg{thebibliography}{99}

\bibitem{App09}
    D. Applebaum,
    \emph{L\'{e}vy Processes and Stochastic Calculus}. Cambridge University Press, Cambridge 2009 (2nd ed).

\bibitem{BR1} V. Barbu, M. R\"ockner, \emph{Probabilistic representation for solutions to non-linear Fokker-Planck equations,} SIAM J. Math. Anal. 50(2018), 4246-4260.

\bibitem{BR2} V. Barbu, M. R\"ockner, \emph{From non-linear Fokker-Planck equations to solutions of distribution dependent SDE,} Ann. Probab. 48(2020), 1902-1920.


\bibitem{BLP15} M. Barczy, Z. Li, G. Pap, \emph{Yamada-Watanabe results for stochastic differential equations
    with jumps,} Int. J. Stoch. Anal. 2015,
    Art. ID 460472, 23pp.

\bibitem{BKRS} V. Bogachev, A. Krylov, M. R\"ockner, S. Shaposhnikov, \emph{Fokker-Planck-Kolmogorov Equations,}  Monograph, AMS(2015).




\bibitem{CA} K. Carrapatoso, \emph{Exponential convergence to equilibrium for the homogeneous Landau equation with hard potentials,} Bull. Sci. Math. 139(2015), 777-805.

\bibitem{DH} C.-S. Deng, X. Huang, \emph{Harnack inequalities for functional SDEs driven by subordinate Brownian motions,}
    Potential Anal. 56(2022), 213-226.

 \bibitem{DV1} L. Desvillettes, C. Villani, \emph{On the spatially homogeneous Landau equation for hard potentials, Part I :
existence, uniqueness and smothness,}  Comm. Part. Diff. Equat. 25(2000), 179-259.
\bibitem{DV2}  L. Desvillettes, C. Villani, \emph{On the spatially homogeneous Landau equation for hard potentials, Part II:
H-Theorem and Applications,} Comm. Part. Diff. Equat. 25(2000), 261-298.



\bibitem{FG} N. Fournier, A. Guillin, \emph{From a Kac-like particle system to the Landau equation for hard potentials and Maxwell molecules,}
 Ann. Sci. l'ENS 50(2017), 157-199.


\bibitem{Gu} H. Gu\'erin, \emph{Existence and regularity of a weak function-solution for some Landau equations with a stochastic
approach,} Stochastic Process Appl. 101(2002), 303-325.



\bibitem{HRW} X. Huang, M. R\"{o}ckner, F.-Y. Wang, \emph{
Non-linear Fokker--Planck equations for probability measures on path space and path-distribution dependent SDEs,}  Discrete Contin. Dyn. Syst. 39(2019), 3017-3035.

\bibitem{IW} I. Ikeda, S. Watanabe, \emph{Stochastic Differential Equations and Diffusion Processes,} North-Holland, Amsterdam 1981.
\bibitem{JMW} B. Jourdain, S. M\'{e}l\'{e}ard, W. A. Woyczynski, \emph{Nonlinear SDEs driven by L\'{e}vy processes
    and related PDEs,} ALEA Lat. Am. J. Probab. Math. Stat.
    4(2008), 1-29.


\bibitem{LMW} M. Liang, M. B. Majka, J. Wang, \emph{Exponential ergodicity for SDEs and
McKean-Vlasov processes with L\'{e}vy noise,} Ann. Inst. Henri Poincar\'e Probab. Stat. 57(2021), 1665-1701.


\bibitem{Nov} A.A. Novikov, \emph{On discontinuous
martingales,}  Theory Probab. Appl. 20(1975), 11-26.







\bibitem{RXZ20} M. R\"ockner, L. Xie, X. Zhang,
\emph{Superposition principle for non-local
Fokker-Planck-Kolmogorov operators,} Probab. Theory Related
    Fields 178(2020), 699-733.

\bibitem{Sato}
K. Sato, \emph{L\'{e}vy Processes and Infinitely Divisible
Distributions}. Cambridge University Press,
Cambridge 2013 (2nd ed).

\bibitem{SSV12}
    R.L. Schilling, R. Song, Z. Vondra\v{c}ek, \emph{Bernstein Functions. Theory and Applications}. De Gruyter, Studies in Mathematics vol.\ 37, Berlin 2012 (2nd ed).


\bibitem{Song}  Y. Song, \emph{Gradient estimates and exponential ergodicity for mean-field SDEs,} J. Theort. Probab. 33(2020), 201-238.



\bibitem{V2} C. Villani, \emph{On the spatially homogeneous Landau equation for Maxwellian Mocecules,} Math. Mod. Meth. Appl. Sci. 8(1998), 957-983.

\bibitem{Wan97} F.-Y. Wang, \emph{Logarithmic Sobolev inequalities on noncompact Riemannian manifolds,} Probab. Theory Related
    Fields 109(1997), 417-424.




\bibitem{Wbook}  F.-Y. Wang,  \emph{Harnack Inequalities and
Applications for Stochastic Partial Differential Equations}.
Springer, Berlin 2013.








\bibitem{W16} F.-Y. Wang, \emph{Distribution-dependent SDEs for Landau type equations,} Stochastic Process Appl. 128(2018), 595-621.




\bibitem{WW} F.-Y. Wang, J. Wang,  \emph{Harnack  inequalities for stochastic equations driven by L\'{e}vy noise,} J. Math. Anal. Appl. 410(2014), 513-523.


\bibitem{ZXC} X. Zhang, \emph{Derivative formulas and gradient estimates for SDEs driven by $\alpha$-stable processes,} Stochastic Process Appl. 123(2013), 1213-1228.
\end{thebibliography}

\end{document}